\newcommand{\cal}[1]{\mathcal{#1}}
\theoremstyle{plain}
\newtheorem{theo}{Theorem}
\newtheorem{lemma}{Lemma}[section]
\newtheorem{proposition}[lemma]{Proposition}
\newtheorem{corollary}[lemma]{Corollary}
\theoremstyle{definition}
\newtheorem{definition}[lemma]{Definition}
\newtheorem{remark}[lemma]{Remark} 
\newtheorem{example}[lemma]{Example}
\let\egthree=\phi 
\let\phi=\varphi
\let\varphi=\egthree
\newcounter{sebcomments}
\begin{document}
\title{Dynamical properties of the absolute period foliation}
\author{Ursula Hamenst\"adt}
\thanks
{Keywords: Abelian differentials, affine invariant manifolds, 
absolute period foliation\\ 
AMS subject classification: 37C40, 37C27, 30F60\\
Research
partially supported by ERC grant 10160104}
\date{November 24, 2015} 

\begin{abstract} We show that the absolute period foliation
of the principal stratum of abelian differentials on a surface
of genus $g\geq 3$ is ergodic.  We also investigate the absolute
period foliation on affine invariant manifolds.
\end{abstract}

\maketitle

\section{Introduction}

The moduli space of abelian differentials on a surface
of genus $g\geq 2$ naturally decomposes into \emph{strata}
of differentials with prescribed numbers of zeros. 
Period coordinates on such a stratum ${\cal Q}$ are defined
by evaluation of an abelian 
differential on a basis for relative homology.
If ${\cal Q}$ is a stratum of differentials with 
more than one zero then it admits a 
natural foliation whose leaves consist 
of differentials with (locally) fixed absolute
periods. This foliation is smooth and has been analyzed in 
\cite{McM13,McM14,MinW14}; it is called the \emph{absolute period
foliation}. 

A smooth foliation of an orbifold ${\cal Q}$ is called \emph{ergodic} 
if any Borel subset of ${\cal Q}$ which is saturated for the foliation
either has full or vanishing Lebesgue measure. In \cite{McM14},
tools from homogeneous dynamics are used to 
show that the absolute period foliation of the principal stratum in 
$g=2,3$ is ergodic. 
Calsamiglia, Deroin and Francaviglia 
\cite{CDF15} completely classified the closures of the
leaves of the absolute period foliation. As a consequence,
they obtain ergodicity of the absolute period foliation 
of the principal stratum in every genus.

Our main goal is to give a simple proof of the latter fact.

\begin{theo}\label{ergodicmain}
The absolute period foliation of the principal stratum is ergodic in every 
genus $g\geq 2$.
\end{theo}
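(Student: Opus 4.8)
The plan is to prove ergodicity of the absolute period foliation on the principal stratum by exploiting the interaction between the foliation and the Teichm\"uller geodesic or horocycle flow, combined with the action of the mapping class group. The key structural fact is that period coordinates identify a neighborhood in the stratum with an open set in the relative cohomology $H^1(S,Z;\mathbb{C})$, where $Z$ is the zero set of the differential, and the leaves of the foliation are precisely the fibers of the natural projection $H^1(S,Z;\mathbb{C})\to H^1(S;\mathbb{C})$ to absolute cohomology. Thus the foliation is, in period charts, a linear foliation by affine subspaces parallel to the kernel $H^1(S,Z;\mathbb{C})\to H^1(S;\mathbb{C})$, i.e. the relative cohomology classes that vanish on absolute cycles. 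Ergodicity should follow if one can show that the subgroup of the monodromy (mapping class group) action preserving the absolute periods acts with dense orbits on the leaves, or more precisely that any foliation-saturated invariant set has boundary of measure zero.

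\begin{proof}[Proof sketch]
First I would pass to a leaf of the foliation and recall that each leaf is itself a manifold carrying an affine structure coming from period coordinates, so that the leaf is modeled on the kernel $K=\ker\big(H^1(S,Z;\mathbb{C})\to H^1(S;\mathbb{C})\big)$. The Lebesgue measure class on the stratum disintegrates along leaves into the affine Lebesgue measure on $K$ and a transverse measure on the absolute periods. The strategy is then to find a group of transformations preserving the foliation and acting ergodically on each leaf with respect to this affine measure.

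The main tool I would deploy is the $SL(2,\mathbb{R})$-action together with the full mapping class group action on the stratum. The $SL(2,\mathbb{R})$-action does not preserve leaves of the absolute period foliation, but it interacts with it in a controlled way: the foliation is invariant as a foliation under $SL(2,\mathbb{R})$, since the splitting of relative cohomology into its real and imaginary parts is respected, and the horocycle and geodesic flows act on the transverse absolute periods. I would use the ergodicity of the Teichm\"uller geodesic flow (or the classification of $SL(2,\mathbb{R})$-invariant measures by Eskin--Mirzakhani--Mohammadi) to upgrade invariance under the foliation to full ergodicity. Concretely, suppose $A$ is a foliation-saturated Borel set of positive measure; I would show that $A$ is essentially invariant under a group action whose ergodicity is already known, forcing $A$ to have full measure.

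The cleanest route, and the one I expect to actually carry out, is a cohomological/closing argument: I would show that the leaf closures are all open (equivalently, that a single leaf is dense in the stratum). For this the decisive input is that the principal stratum is connected and that the absolute period foliation has a transversely homogeneous structure whose holonomy group is $Sp(2g,\mathbb{Z})$ acting on $H^1(S;\mathbb{Z})$. Because the action of $Sp(2g,\mathbb{Z})$ on the absolute cohomology is via a lattice in $Sp(2g,\mathbb{R})$ whose action on the relevant homogeneous space is ergodic, minimality of the relevant leaves follows, and ergodicity of a minimal foliation with an invariant transverse measure class then yields the result. The hard part will be precisely this last step: controlling the holonomy of the foliation and showing that the relevant orbit closures are open, i.e. ruling out proper invariant subvarieties, since the structure of leaf closures is exactly what required the substantial work of Calsamiglia--Deroin--Francaviglia. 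A genuinely simple proof must therefore find a shortcut that avoids classifying all leaf closures, for instance by directly establishing that the transverse holonomy cocycle has no nontrivial invariant measurable sections, which I would attempt via a density argument for Dehn twists along a filling system of curves.
\end{proof}
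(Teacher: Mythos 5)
There is a genuine gap: what you have written is a research plan whose decisive steps are exactly the ones left open, and you say so yourself (``the hard part will be precisely this last step\dots which I would attempt via a density argument''). Concretely, each of your three routes is missing its engine. In the first route, a set saturated for the absolute period foliation has no reason to be invariant under the Teichm\"uller flow or the $SL(2,\mathbb{R})$-action --- the foliation is permuted by $SL(2,\mathbb{R})$, not leafwise preserved --- so ergodicity of those actions cannot be applied to $A$ directly; you give no mechanism to produce such invariance. In the second route, the assertion that minimality of a foliation with an invariant transverse measure class implies ergodicity is false as a general principle; in this specific setting the implication ``dense leaf $\Rightarrow$ ergodic'' does hold, but only via a Coud\`ene/Hopf-type argument exploiting the interaction with the stable and unstable foliations of the Teichm\"uller flow, which you never set up. Worse, establishing that leaves are dense (or that the $Sp(2g,\mathbb{Z})$-holonomy has no invariant sets) is precisely the content of the Calsamiglia--Deroin--Francaviglia transfer principle, i.e.\ the hard theorem this paper is designed to circumvent; your proposed shortcut via Dehn twists along a filling curve system is not carried out and is not obviously easier than what it replaces.

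For comparison, the paper's proof avoids all global holonomy/leaf-closure analysis by an induction on genus through the principal boundary. One degenerates along the real REL flow onto nodal surfaces $S_1\sqcup S_2$ (infinite core faces in the sense of Eskin--Masur--Zorich); near a regular boundary point the closure $\overline{\cal Q}$ has a product structure $V\times B(\epsilon)$ in which the disks $\{x\}\times B(\epsilon)$ lie in leaves (Proposition \ref{coreface}), and on the boundary face the foliation becomes the product of the foliations of the two components together with motion of the node, so ergodicity there reduces to the induction hypothesis (Lemma \ref{ergodicone}). The only dynamical inputs are ergodicity of the Teichm\"uller flow --- used in Lemma \ref{boundaryintersect} to show that almost every leaf actually reaches every core face --- and McMullen's base cases $g=2,3$; the conclusion that a saturated set of positive measure has full measure then follows from an elementary argument on the distribution of the area parameters $(a_1,a_2,a_3)$ of a two-node degeneration. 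If you want to salvage your approach, the honest assessment is that its missing step is equivalent in difficulty to the theorem itself, whereas the boundary-degeneration argument trades that global problem for a local product structure plus induction.
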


We do not know whether the absolute period foliation 
of a stratum which is not principal is ergodic. 
Using an argument of Coud\'ene \cite{C09}, it is not hard to 
see that ergodicity is equivalent to the existence of a dense leaf.

By the groundbreaking work of Eskin, Mirzakhani and Mohammadi
\cite{EMM13}, the closure of an orbit 
for the $SL(2,\mathbb{R})$-action on any stratum ${\cal Q}$ 
is an affine invariant manifold. 
Examples of non-trivial orbit closures are arithmetic
Teichm\"uller curves. They arise from branched covers
of the torus, and they 
are dense in any stratum of abelian differentials.
Other examples of orbit closures different from entire
components of strata can be constructed using more general
branched coverings.



In Section 3 we
introduce rigid and flexible tangent fields of the absolute
period foliation and use this to investigate
the principal boundary of an affine invariant manifold.

\noindent
{\bf Acknowledgement:} I am grateful to Barak Weiss for 
useful discussions.

\section{The absolute period foliation}\label{absoluteperiod}

Let ${\cal Q}$  be a component of a stratum 
with $k\geq 2$ zeros
in the moduli space of abelian differentials on a surface
of genus $g\geq 2$.
The absolute periods of 
an abelian differential $\omega\in {\cal Q}$
define a local submersion of orbifolds
\[{\cal Q}\supset U\to H^1(X,\mathbb{C})/{\rm Aut}(X)\]
whose fibres are the intersections with ${\cal Q}$ 
of the leaves of the 
\emph{absolute period foliation} ${\cal A\cal P}({\cal Q})$.
This foliation is transverse to the fibres of the canonical
projection $\pi:{\cal Q}\to {\cal M}_g$ (here ${\cal M}_g$ denotes
the moduli space of Riemann surfaces of genus $g$)
and to the orbits of the natural action of $SL(2,\mathbb{R})$.

The leaf ${\cal A\cal P}(\omega)$ 
of ${\cal A\cal P}({\cal Q})$ through $\omega$
is locally a flat submanifold
of ${\cal Q}$ which can explicitly be 
described \cite{MinW14,McM13}.

Assume for the moment that ${\cal Q}$ is the principal stratum.
Denote by $Z(\omega)$ the zero set of $\omega\in {\cal Q}$.
The cardinality of $Z(\omega)$ equals $2g-2$.
At each zero 
$p\in Z(\omega)$ there is an infinitesimal 
deformation of $\omega$ called the \emph{Schiffer
variation} \cite{McM13} which is defined as follows.

Let $X$ be the Riemann surface underlying $\omega$. 
Choose a complex coordinate $z$ for $X$ near $p$ so that 
in this coordinate, $\omega$ can be written as
$\omega=(z/2)dz$. Such a coordinate is unique up to multiplication
with $-1$. 
Choose a vertical arc $A_t=i[-2u,2u]$ in this coordinate 
where $t=u^2$. Slit $S$ open along $A_t$ and fold 
each of the two resulting
arcs so that $z$ is identified with $-z$ (see p.1235 
of \cite{McM13}).

The result is a new Riemann surface $X_t$ with a 
distinguished horizontal arc $B_t$
and a natural holomorphic map $f_t:X-A_t\to X-B_t$. 
The one-form $\omega_t$ with $f_t^*\omega_t=\omega$ is globally defined, and
it only depends on the parameter $t$ and 
on the choice of the zero $p$ of $\omega$. 
The Schiffer variation of $X$ is 
\[{\rm Sch}(\omega,p)=dX_t/dt\vert_{t=0}.\]

It will be useful to have a geometric description 
of the deformation of the one-forms $\omega_t$ defining the  
Schiffer variation. Namely, 
there are four horizontal separatrices at $p$ for the flat metric 
defined by $\omega$. In a complex coordinate $z$ near  
$p$ so that $\omega=(z/2)dz$, 
the horizontal separatrices are the four rays contained in the 
real or the imaginary axis. 
The restriction of $\omega$ to these rays defines
an orientation on the rays. With respect to this orientation, 
the two rays contained in the real
axis are outgoing from $p$, 
while the rays contained in the
imaginary axis are incoming. The Schiffer variation slides the singular point 
backwards along the incoming rays in the 
imaginary axis. 
Thus if one of the two separatrices in the
imaginary axis is a saddle connection for $\omega$, then the flat length 
of the
corresponding saddle connection for $\omega_t$ is decreasing with $t$.


If $\omega$ has a zero of order $n\geq 2$ at $p$ then 
the Schiffer variation at $p$ is defined as follows (see
p.1235 of \cite{McM13}).  Choose a coordinate $z$ near
$p$ so that $\omega=z^ndz$ in this coordinate. 
This choice of coordinate is unique up to multiplication with
$e^{\ell 2\pi i/{n+1}}$ for some $\ell\leq  n$. 
There are $n+1$ horizontal separatrices 
at $p$ for the flat metric defined by $\omega$ whose orientations
point towards $p$. For small $u>0$
cut the surface $S$ open along the initial subsegments of length $2u$ 
of these 
$n+1$ horizontal segments.  
The result is a $2n+2$-gon which we refold as in the 
case of a simple zero.

Now let $C$ be a smooth simple loop enclosing the zero
$p\in Z(\omega)$. Then 
the Schiffer variation at $p$ 
is the real part $\delta(C,-1/\omega)$ of 
a complex twist deformation $\delta(C,v)$ of $X$
about $C$ 
where $v$ is a holomorphic
vector field along $C$.

Namely, there is a 
vector ${\rm tw}(C)$ tangent to 
${\cal A\cal P}({\cal Q})$ at $\omega$ defined by 
\[\langle {\rm tw}(C),E\rangle =C\cdot E\]
where $E\in H_1(X,Z(\omega))$ and where $\cdot$ 
is the natural intersection pairing
\[H_1(X-Z(\omega))\times H_1(X,Z(\omega))\to \mathbb{Z}.\]
The tangent space at $\omega$ to the absolute period foliation
is generated by the transformations ${\rm tw}(C_p)$,
$p\in Z(\omega)$, subject to the relation
$\sum {\rm tw}(C_{p_i})=0$ (see p.1236 of \cite{McM13}).
If ${\cal Q}$ is the principal stratum then 
the leaves of
the absolute period foliation ${\cal A\cal P}({\cal Q})$ 
have complex dimension $2g-3$.
We refer to \cite{McM13} for details and an explanation of these
notations.

Note 
that the tangent bundle of ${\cal A\cal P}({\cal Q})$
is naturally equipped with a complex structure $J$ as well as with a 
real structure. The subbundle of $T{\cal A\cal P}({\cal Q})$ spanned by
the twist deformations corresponding to 
the Schiffer variations is a maximal real subbundle for this
real structure. By abuse of notation, we call a twist
deformation corresponding to a Schiffer variation again
a Schiffer variation, i.e. we view Schiffer variations
as tangent vectors of the absolute period foliation.

\begin{example}\label{node}
Let $\omega_1,\omega_2$ be two abelian differentials on 
two closed surfaces $S_1,S_2$ of genus
$g_1,g_2$. Assume that the area of $\omega_i$ is $a_i$ for some
$a_i>0$ with $a_1+a_2=1$. 
Cut a small horizontal slit into
$S_1,S_2$ of the same length. The differentials $\omega_i$ 
define an orientation of these slits. 
Glue $S_1$ to $S_2$ with an orientation
reversing isometry along the slits. 
The result is an area one abelian differential $\omega$ on a surface
of genus $g_1+g_2$ with two singular points $p_1,p_2$ which are connected
by two homologous horizontal saddle connections of the same length.
Assume that the orientation defined by $\omega$ 
of these saddle connections points from $p_1$ to $p_2$.
The deformation induced by the 
Schiffer variation corresponding to the parameters $(-1,1)$
decreases the length of the slit and limits in a surface with
nodes. This surface
with nodes  consists of the surfaces $S_1,S_2$ attached at one point, 
equipped with an abelian differential which maps to 
the differentials $\omega_1,\omega_2$ by the marked point forgetful map.
\end{example}

Let again ${\cal Q}$ be a component of a stratum with 
$k\geq 2$ zeros and let $\hat {\cal Q}$ be a finite 
normal cover of ${\cal Q}$ such that 
there is a consistent numbering of the zeros of $q\in \hat {\cal Q}$ 
varying continuously with $q$. 
We may assume that ${\cal Q}$ is the quotient of 
$\hat{\cal Q}$ by the action of 
a subgroup of the symmetric
group in $k$ variables.

Let $\mathfrak{a}=(a_1,\dots,a_{k})\in \mathbb{R}^{k}$  be any 
$k$-tuple of \emph{real} numbers with $\sum_ia_i=0$.  Then 
$\mathfrak{a}$ defines a smooth vector field $X_{\mathfrak{a}}$ on 
$\hat{\cal Q}$ as follows. For each $\omega\in \hat{\cal Q}$, the value 
of $X_{\mathfrak{a}}$ at $\omega$ is the Schiffer variation for the parameters
$(a_1,\dots,a_{k})$ at the numbered zeros of $\omega$.
Thus $X_{\mathfrak{a}}$ is tangent to the absolute period foliation.

The \emph{Teichm\"uller flow} 
$\Phi^t$ on ${\cal Q}$ lifts to a smooth flow on $\hat {\cal Q}$
denoted by the same symbol. Its 
derivative acts on 
the tangent bundle of $\hat {\cal Q}$. We have

\begin{lemma}\label{invariance2}
$d\Phi^tX_{\mathfrak{a}}=e^{t}X_{\mathfrak{a}}$.
\end{lemma}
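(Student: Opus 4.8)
The plan is to compute $d\Phi^t X_{\mathfrak{a}}$ in period coordinates, where both the Teichm\"uller flow and the vector field $X_{\mathfrak{a}}$ take a particularly simple form. First I would recall that period coordinates identify a neighborhood of $\omega$ in $\hat{\cal Q}$ with an open subset of $H^1(X,Z(\omega);\mathbb{C})$, and that these are affine coordinates for the flat (Gauss--Manin) structure on the tangent bundle. In this trivialization the Teichm\"uller flow acts \emph{linearly}: writing $H^1(X,Z;\mathbb{C})=H^1(X,Z;\mathbb{R})\otimes_{\mathbb{R}}\mathbb{R}^2$ according to the real and imaginary parts of periods, the flow $\Phi^t$ is the linear map $g_t=\mathrm{diag}(e^t,e^{-t})$ acting on the $\mathbb{R}^2$-factor. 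Since this map is linear, its derivative in these coordinates is the map itself; in particular it scales a real (horizontal) class by $e^t$ and an imaginary (vertical) class by $e^{-t}$.

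The key step is to show that $X_{\mathfrak{a}}$ is, in period coordinates, a parallel vector field taking a constant real value. By definition $X_{\mathfrak{a}}(\omega)=\sum_i a_i\,{\rm tw}(C_{p_i})$ is a real linear combination of the twist deformations attached to loops $C_{p_i}$ around the numbered zeros. Each class ${\rm tw}(C_{p_i})$ is characterized by $\langle {\rm tw}(C_{p_i}),E\rangle=C_{p_i}\cdot E$ for $E\in H_1(X,Z(\omega))$, and these intersection numbers are integers. Hence every period of ${\rm tw}(C_{p_i})$ is real, so ${\rm tw}(C_{p_i})\in H^1(X,Z;\mathbb{R})$, i.e.\ it is a horizontal vector for the real structure on $T{\cal A\cal P}({\cal Q})$; this is precisely the statement that the Schiffer variations span the maximal real subbundle. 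Moreover, since the intersection numbers $C_{p_i}\cdot E$ are locally constant under deformation, the class $\sum_i a_i\,{\rm tw}(C_{p_i})$ is parallel for the Gauss--Manin connection, so $X_{\mathfrak{a}}$ is a constant vector field $\xi\in H^1(X,Z;\mathbb{R})$ in period coordinates.

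Combining the two steps finishes the proof: $d\Phi^t X_{\mathfrak{a}}=g_t\,\xi=e^t\,\xi=e^t X_{\mathfrak{a}}$, since $\xi$ is real. I expect the main obstacle to be bookkeeping rather than a deep difficulty: one must verify that the real subbundle spanned by the Schiffer variations is genuinely the real-period (horizontal) part of $H^1(X,Z;\mathbb{C})$ on which $g_t$ acts as multiplication by $e^t$, and that the conventions for the complex structure $J$ and for the direction of the Teichm\"uller flow are consistent with this sign. Care is also needed to justify that $X_{\mathfrak{a}}$ is parallel, so that $d\Phi^t$ is literally the linear map $g_t$ applied to a fixed class, with no additional connection term.
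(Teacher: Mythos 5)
Your proposal is correct, but it argues along a genuinely different route than the paper. The paper's proof is geometric and stays with the slit construction defining the Schiffer variation: it tracks the arc $A_s$ of length $4\sqrt{s}$ (in the preferred coordinate $z$ with $\omega=(z/2)dz$), notes that the horizontal stretch multiplies this coordinate length by $e^{t/2}$, so that $A_s$ is carried to $A_{e^ts}$, i.e.\ $\Phi^t\circ\Lambda^s_{\mathfrak{a}}=\Lambda^{e^ts}_{\mathfrak{a}}\circ\Phi^t$, and differentiates in $s$. You instead work in period coordinates: $X_{\mathfrak{a}}=\sum_i a_i\,{\rm tw}(C_{p_i})$ is a Gauss--Manin parallel class with real (indeed integral) pairings, and $\Phi^t$ acts in these flat charts as the linear map ${\rm diag}(e^t,e^{-t})$ on real and imaginary parts of periods, so its derivative multiplies a parallel real class by $e^t$. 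Your version makes the exponent transparent and avoids the quadratic slit-length bookkeeping, which is exactly where the paper's phrasing is delicate: the stated ``expansion rate $e^{t/2}$'' is applied there to the coordinate length of the slit, and it is the relation $t=u^2$ between slit size and Schiffer parameter that converts $e^{t/2}$ into $e^{t}$; your argument also yields the companion identity $d\Phi^t(JX_{\mathfrak{a}})=e^{-t}JX_{\mathfrak{a}}$ for free. The price is that all the content of your proof is concentrated in the identification $X_{\mathfrak{a}}=\sum_i a_i\,{\rm tw}(C_{p_i})$, i.e.\ in McMullen's computation that the Schiffer parametrization $t=u^2$ is precisely the (real) period displacement: had the variation been parametrized by slit length, the field would not be parallel in period coordinates and the scaling factor would come out as $e^{t/2}$. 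Since the paper's setup explicitly identifies Schiffer variations with the twist vectors ${\rm tw}(C_p)$, this step is legitimate to quote, and the convention check you flag at the end (that $\Phi^t$ scales real periods by $e^{t}$) is indeed the only remaining point of care, and it is forced by the statement of the lemma itself.
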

\begin{proof} Let $\omega\in \hat {\cal Q}$; then the horizontal
foliation ${\cal F}$ of $\omega$ is defined by
$\omega(T{\cal F})\in \mathbb{R}$. 
The Teichm\"uller flow expands the 
horizontal foliation ${\cal F}$ of 
$\omega$ with the expansion rate $e^{t/2}$. 
Thus if $A_s$ is an arc of length $4\sqrt{s}$
in the imaginary axis for the preferred coordinate near $p$ 
 (recall that
this arc is horizontal for the flat metric defined
by $\omega$)
then the image of $A_s$ in $\Phi^t\omega$ 
is an arc of length $4e^{t/2}\sqrt{s}=
4\sqrt{e^{t}s}$ in the imaginary
axis of a preferred coordinate. Taking derivatives shows the claim.
\end{proof}

The vector field $X_{\mathfrak{a}}$ defines a flow 
$\Lambda_{\mathfrak{a}}^t$ on 
$\hat {\cal Q}$. This flow is incomplete as a horizontal saddle
connection may give rise to a finite flow line limiting on a lower
dimensional stratum (see p.1235 of \cite{McM13}). 
There may also be limit points on 
surfaces with nodes as described in Example \ref{node}.
However, if $q$ does not have any horizontal saddle connection
then the flow line of $\Lambda^t_{\mathfrak{a}}$ through $q$
is defined for all times \cite{MinW14}.
As almost every point with respect to the Lebesgue measure 
$\lambda$ is a differential without horizontal saddle connection,
$\Lambda_{\mathfrak{a}}^t$ defines a flow 
on a subset of $\hat{\cal Q}$ of full Lebesgue measure.

For $\mathfrak{a}\not=\mathfrak{b}$ the 
flows $\Lambda_{\mathfrak{a}}^t$ and $\Lambda_{\mathfrak{b}}^s$
commute and hence these flows  fit together to 
a (local) action of the
group $\mathbb{R}^{k-1}$ on $\hat{\cal Q}$.   
This action is smooth, and its local
orbits naturally develop to a smooth foliation
of $\hat{\cal Q}$ called the \emph{real REL foliation} \cite{MinW14}.
This foliation is a subfoliation of the absolute period foliation.

A leaf of the \emph{local strong unstable foliation} of 
$\hat {\cal Q}$ consists of abelian differentials with the same
horizontal foliation (up to Whitehead moves).

\begin{lemma}\label{stable}
The real REL foliation is a subfoliation 
of the strong unstable foliation of $\hat {\cal Q}$ which is invariant
under the action of the Teichm\"uller flow and under 
holonomy along the strong stable foliation.
\end{lemma}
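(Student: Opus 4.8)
The plan is to carry out the whole argument in local period coordinates, where all three foliations become affine and the statement reduces to linear algebra. Recall that period coordinates identify a neighbourhood of $\omega$ in $\hat{\cal Q}$ with an open subset of $H^1(X,Z(\omega);\mathbb{C})$ via $\omega\mapsto\big(\int_E\omega\big)_E$, $E\in H_1(X,Z(\omega))$, and that under the Gauss--Manin trivialization the tangent bundle is the constant bundle $H^1(X,Z(\omega);\mathbb{C})=H^1(X,Z(\omega);\mathbb{R})\oplus iH^1(X,Z(\omega);\mathbb{R})$. In this identification the strong unstable foliation is the foliation by the affine subspaces on which the imaginary parts $\mathrm{Im}(\int_E\omega)$ are constant (equivalently, the horizontal measured foliation is fixed), while the strong stable foliation is the one on which the real parts are constant. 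The first fact I would record is that the real REL distribution, i.e. the span of the fields $X_{\mathfrak a}$, is exactly the constant real subspace $K=\ker\big(H^1(X,Z(\omega);\mathbb{R})\to H^1(X;\mathbb{R})\big)$ of relative classes: the defining relation $\langle\,\mathrm{tw}(C),E\,\rangle=C\cdot E\in\mathbb{Z}$ shows that each twist has real, indeed integral and $\omega$-independent, periods which annihilate absolute homology.

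Given this, the inclusion into the strong unstable foliation is immediate. Since the periods $C\cdot E$ of $\mathrm{tw}(C)$ are real, the flow $\Lambda^t_{\mathfrak a}$ moves only the real parts $\mathrm{Re}(\int_E\omega)$ and leaves every imaginary part fixed, so it stays inside a single leaf of the strong unstable foliation. Geometrically this says that a real REL deformation slides the zeros along horizontal leaves of $\omega$ and therefore does not alter the horizontal measured foliation; the clause ``up to Whitehead moves'' is precisely what absorbs the combinatorial changes that occur when a sliding zero crosses a separatrix or creates a horizontal saddle connection. This is the point at which one must be a little careful geometrically, but it does not affect the measured foliation and hence not the inclusion.

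Invariance under the Teichm\"uller flow is then essentially Lemma~\ref{invariance2}. Because $d\Phi^tX_{\mathfrak a}=e^tX_{\mathfrak a}$ for every $\mathfrak a$ with $\sum_i a_i=0$, the derivative $d\Phi^t$ carries the distribution spanned by the $X_{\mathfrak a}$ at $\omega$ onto the corresponding distribution at $\Phi^t\omega$, sending each generator to a positive multiple of itself. Since this distribution is integrable with leaves the real REL leaves, $\Phi^t$ permutes those leaves, which is the asserted invariance; the eigenvalue $e^t>0$ also reconfirms that the real REL directions are unstable rather than stable.

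The substantive point is invariance under strong stable holonomy, and this is where I expect the main obstacle to lie. In the period chart the strong unstable leaves $\{\mathrm{Im}=\mathrm{const}\}$ are transversals to the strong stable foliation, and the strong stable holonomy between two such transversals is, in these coordinates, the translation that changes all imaginary parts and fixes all real parts. As the subspace $K$ lies entirely in the real part and is constant in the Gauss--Manin trivialization, such a translation sends a real REL leaf, a coset of $K$ inside one $\{\mathrm{Im}=\mathrm{const}\}$, to a real REL leaf inside the target transversal. The step that needs justification is that the abstractly defined strong stable holonomy really is this coordinate translation: one must verify that the local product structure of the Teichm\"uller flow is compatible with the $\mathrm{Re}/\mathrm{Im}$ splitting and that the subspace $K$ of relative classes is genuinely holonomy invariant, that is, independent of the marking used to set up the chart. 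This reduces to the purely homological nature of the twist classes $\mathrm{tw}(C)$, which is exactly what makes $K$ intrinsic and hence preserved by all of these structure-preserving maps.
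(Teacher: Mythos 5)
Your proof is correct, and its first two steps --- tangency of the fields $X_{\mathfrak{a}}$ to the strong unstable foliation, and $\Phi^t$-invariance via Lemma \ref{invariance2} --- are exactly the paper's steps. The genuine difference is how strong stable holonomy is handled. The paper settles it in one stroke by citing from \cite{MinW14} the intrinsic characterization of a real REL leaf: the set of all differentials in the stratum with fixed horizontal foliation whose vertical foliations define the same absolute cohomology class. Since strong stable holonomy preserves vertical foliations outright and carries strong unstable leaves to strong unstable leaves, invariance is immediate from this characterization, with no coordinates and no chart-matching. Your argument is the local, linear-algebraic expression of the same fact: in a period chart a real REL leaf is a coset of the constant real subspace $K=\ker\bigl(H^1(X,Z(\omega);\mathbb{R})\to H^1(X;\mathbb{R})\bigr)$ inside $\{\mathrm{Im}=\mathrm{const}\}$, and local holonomy is translation by an imaginary vector, which manifestly permutes such cosets. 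What this buys is an explicit mechanism for why holonomy acts affinely on REL leaves; what it costs is precisely the issue you flag yourself: a holonomy map along a long stable path crosses many charts and passes through differentials with saddle connections, so one must invoke the Gauss--Manin flatness of $K$ (guaranteed by the topological, $\omega$-independent nature of the classes ${\rm tw}(C)$, as you note) together with the ``up to Whitehead moves'' caveat at non-generic points. Your closing sketch --- reducing chart-independence to the purely homological nature of the twist classes --- is the correct way to fill that step, and once it is filled the two proofs coincide in content, yours being the coordinate version of the paper's intrinsic one.
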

\begin{proof}
By construction, 
the Schiffer variation defined by the vector field
$X_{\mathfrak{a}}$ preserves
the horizontal foliation of an abelian differential up to Whietehead moves.
Hence the vector fields $X_{\mathfrak{a}}$ are tangent to the
strong unstable foliation of $\hat{\cal Q}$. 
As a consequence, the real REL foliation is a subfoliation of 
the strong unstable foliation, and it is smooth.
We refer to \cite{McM14} for a detailed analysis 
of this foliation in the case $g=2$.

Together with Lemma \ref{invariance2}, this implies
invariance under the action of the Teichm\"uller flow.
Invariance under holonomy along the strong stable foliation 
follows from the fact that a leaf of the real REL foliation
can be characterized as the set of all abelian differentials
in the stratum with fixed horizontal foliation and the property
that the vertical foliations all define the same
absolute cohomology class (see \cite{MinW14}). 
The lemma follows.
\end{proof}

\begin{remark}
It is easy to see that the flows $\Lambda_{\mathfrak{a}}^t$ preserve
the Lebesgue measure $\lambda$ of the stratum. It is an
interesting question whether any of these flows is ergodic. 
Our proof of ergodicity of the absolute period foliation
does not give any information to this end.
\end{remark}


The above discussion shows that the absolute 
period foliation has an affine structure 
(see \cite{MinW14} and p.1236 of \cite{McM13} for
more details and compare also \cite{LNW15}).

%
%

Recall that there is a natural circle action on ${\cal Q}$. 
To a point $e^{is}$ on the unit circle $S^1\subset \mathbb{C}^*$ 
and a quadratic differential 
$q$ we associate the differential $e^{is}q$. 
For $\mathfrak{a}\in \mathbb{R}^{k}$ with zero mean, for 
$e^{is}\in S^1$ and for 
$\omega\in {\cal Q}$ 
let 
\[\Lambda_{e^{is}\mathfrak{a}}^t(\omega)=
e^{-is}\Lambda_{\mathfrak{a}}^t (e^{is}\omega).\]
Then $(t,\omega)\to \Lambda^t_{e^{is}\mathfrak{a}}\omega$
defines a flow on $\hat{\cal Q}$ which 
preserves the absolute period foliation.

Following \cite{EMZ03} we define the \emph{principal boundary}
of the component ${\cal Q}$ of a stratum as follows. 
Let $\omega\in {\cal Q}$ and assume that $\omega$ has a 
horizontal saddle connection and that the set of horizontal
saddle connections of $\omega$ is a forest, i.e. it does not have
cycles. Let $p_1,p_2$ be the endpoints of such a saddle connection
$\alpha$, chosen such that $\alpha$ points from $p_1$ to $p_2$
with respect to the orientation defined by $\omega$,
and let $\mathfrak{a}$ be the vector $(-1,1)$ at $p_1,p_2$. 
(Strictly speaking, this is only defined in $\hat {\cal Q}$, but the 
choice of $\alpha$ singles out two zeros of $\omega$ and hence
this makes also sense in ${\cal Q}$).
Then the arc $t\to \Lambda^t_{\mathfrak{a}}\omega$ limits on a 
differential $\zeta$ for which the points $p_1,p_2$ coalesce, and there are
no other identifications of zeros. 
The differential $\zeta$ 
is contained in a component of a stratum in the boundary of 
${\cal Q}$, and we call such a component a \emph{finite core face}
of the principal boundary of ${\cal Q}$. 
The dimension of a finite core face of ${\cal Q}$ equals 
${\rm dim}({\cal Q})-1$.

A second degeneration which gives rise
to a point in the principal boundary is the contraction
of two or more homologous saddle connections. In this
case the resulting surface is a surface consisting of two
or more smooth components which are connected at nodes.
The sum of the genera of these surfaces equals $g$, and the
resulting abelian differential has a regular point or
a zero at a node. We call a component of abelian differentials 
on surfaces with nodes arising in this way an \emph{infinite face}.
We call the infinite face a \emph{core face} if 
it consists of surfaces comprised of two components which 
are attached at a single separating node.
Note that there are up to  
$\lfloor g/2\rfloor$ core faces which correspond to the \emph{type} of 
the decomposition,
i.e. to a decomposition $g=g_1+g_2$ with $g_1,g_2\geq 1$.
The node defines a marked point on each of the components of the
surface with nodes. We call a point in a core face a \emph{regular} 
point if the node is not a zero of the abelian differential on a 
component surface. A point which is not regular is called \emph{singular}.
The set of singular points is of codimension one.


To summarize, there is a decomposition of the 
principal boundary of ${\cal Q}$ into
\emph{faces} (see p.76 of \cite{EMZ03}). Each face 
either is a component of a stratum in the adherence of 
${\cal Q}$ with fewer zeros, or it corresponds
to a \emph{configuration} which consists of a decomposition
of the surface into surfaces $S_i$ of genus $g_i$ with
$\sum_ig_i=g$, a combinatorial configuration of 
attachment data which organizes the glueing at the nodes
and numbers $a_j\geq 0$ which describe the order of the
zero of the differentials at the node (see \cite{EMZ03} for more
details). If we denote by $\overline{\cal Q}$ the union of 
${\cal Q}$ with its principal boundary, then the core faces 
are the faces of codimension one in $\overline{\cal Q}$.

For the remainder of this section we assume that 
${\cal Q}$ is the principal stratum.
The following structure theorem is 
Lemma 9.8 of \cite{EMZ03}. For its formulation,
note that $\overline{\cal Q}$ properly 
contains the 
entire moduli space of area one abelian differentials.
For $\epsilon >0$ let
$B(\epsilon)$ be the disk of radius $\epsilon$ in the complex
plane.

\begin{proposition}\label{coreface}
Let ${\cal F}$ be an infinite
core face of the principal boundary and 
let $\omega\in {\cal F}$ be a regular point. Then there is a 
number $\epsilon>0$, and there is a 
neighborhood $V$ of $\omega$ in ${\cal F}$,  
a neighborhood 
$U$ of $\omega$ in $\overline{\cal Q}$, 
and a homeomorphism
$\phi:V\times B(\epsilon)\to U$ with the following properties.
\begin{enumerate}
\item $\phi(x,0)=x$ for all $x\in V$.
\item $\phi(\{x\}\times B(\epsilon))\subset {\cal A\cal P}(x)$.
\end{enumerate}
\end{proposition}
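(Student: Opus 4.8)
The plan is to construct $\phi$ as the inverse of the degeneration defining ${\cal F}$, that is, as the surgery \emph{opening the node}, and to read off the two properties directly from this surgery. Since $\omega$ is a regular point of the core face, its node is a regular point of the abelian differential on each of the two components $S_1,S_2$; I pick the flat coordinate in which the differential is $dz$ near the node on each side. For $t\in B(\epsilon)$ I perform the slit-and-glue construction of Example \ref{node}, but with a slit of holonomy $t$ (flat length $|t|$, direction $\arg t$) on each component, gluing the two slits by an orientation-reversing isometry. For $t=0$ the slit is trivial and the output is the nodal surface $x$; for $t\neq 0$ it is a genuine differential $X_t\in{\cal Q}$ with two zeros joined by two homologous saddle connections of holonomy $t$. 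In the language of the text, the ray $t\in(0,\epsilon)$ is the orbit of the real REL flow $\Lambda^\bullet_{(-1,1)}$ run backwards from its limit on ${\cal F}$ (Example \ref{node}), while the rotations $\Lambda^\bullet_{e^{is}(-1,1)}$ sweep out the remaining directions, so that $(|t|,\arg t)$ are polar coordinates on $B(\epsilon)$. Setting $\phi(x,t)=X_t$ and letting $x$ vary over a neighborhood $V$ in ${\cal F}$ — the surgery is supported near the node and varies continuously with $x$ — defines $\phi$ on $V\times B(\epsilon)$, with $\epsilon$ uniform over $V$ by continuity after shrinking $V$.

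Property (1) is immediate, since $t=0$ is the trivial slit. For (2) the key point is that the node is \emph{separating}: the vanishing curve is null-homologous, so under the Gauss--Manin identification of nearby fibres one has $H_1(X_t)\cong H_1(S_1)\oplus H_1(S_2)$, and every absolute cycle is represented away from the surgery region. Hence $\int_c\omega_t=\int_c\omega_i$ for $c\in H_1(S_i)$ is independent of $t$, so the whole disk $\{X_t:t\in B(\epsilon)\}$ carries a constant absolute cohomology class and lies in the single leaf ${\cal A\cal P}(x)$ through $x$. This is consistent with $t\mapsto X_t$ being the flow of a twist deformation ${\rm tw}(C)$ about a loop $C$ separating $S_1$ from $S_2$, which is tangent to ${\cal A\cal P}({\cal Q})$ by the discussion preceding Lemma \ref{invariance2}.

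It remains to see that $\phi$ is a homeomorphism onto a neighborhood $U$, which I would verify in period coordinates adapted to the degeneration. Take a basis of $H_1(X_t,Z(\omega_t))$ consisting of cycles supported in $S_1$, cycles supported in $S_2$, and one relative cycle $\delta$ crossing the surgery region from one new zero to the other; then $\int_\delta\omega_t=t$ up to normalization, while the remaining periods are exactly the period coordinates of ${\cal F}$ pulled back from the two components. In these coordinates $\phi$ is the germ of the identity, with ${\cal F}=\{t=0\}$, so it is continuous and injective ($t$ is recovered as the holonomy of the opening saddle connection and $x$ by collapsing it); since $\dim_{\mathbb C}{\cal F}=\dim_{\mathbb C}{\cal Q}-1$ makes $V\times B(\epsilon)$ and ${\cal Q}$ equidimensional, invariance of domain (applied in a chart of $\hat{\cal Q}$ to avoid orbifold subtleties) yields openness. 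The one substantive input — that these period coordinates extend continuously across $\{t=0\}$ and chart $\overline{\cal Q}$ near the codimension-one face — is precisely the local structure of the principal boundary, and this is the main obstacle: one must show the opening-the-node map sweeps out a \emph{full} neighborhood of $\omega$ injectively, i.e. that every nearby surface arises from a unique $(x,t)$ with no further degeneration appearing. Regularity of $\omega$ is used exactly here, to guarantee the standard half-plane picture near the node and a single opening parameter $t$. I would import this local analysis from \cite{EMZ03} rather than reprove it.
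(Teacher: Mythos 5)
Your proposal is correct and follows essentially the same route as the paper: you open the node by slitting both components at the marked point in the direction of the complex parameter and regluing, observe that this preserves absolute periods (giving properties (1) and (2)), and defer the statement that this surgery is a homeomorphism onto a full neighborhood of $\omega$ in $\overline{\cal Q}$ to the local structure theory of the principal boundary in \cite{EMZ03}. The paper's proof is exactly this, citing Lemma 9.8 of \cite{EMZ03} for the homeomorphism property; your added details (the Gauss--Manin argument for constancy of absolute periods and the adapted period coordinates) are consistent elaborations rather than a different method.
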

\begin{proof} A regular 
point $z\in {\cal F}$ is defined by two abelian differentials
$\omega_1,\omega_2$ on surfaces $S_1,S_2$ 
attached at a marked point $p_1,p_2$. The marked point $p_i$ 
is a regular point for $\omega_i$. 
Take a vector 
$\gamma$ in the complex plane of sufficiently small length
$r<\epsilon$, slit $S_1,S_2$ open
at the marked points in 
direction of $\gamma$ and glue the abelian differentials
$\omega_1,\omega_2$ along the slits.
The resulting differential $\phi(z,\gamma)$ 
has the same absolute periods
as $z$. 

By Lemma 9.8 of \cite{EMZ03}, for a sufficiently small neighborhood
$V$ of $z$ and sufficiently small $\epsilon$ the map 
$\phi$ defines a homeomorphism of $V\times B(\epsilon)$ onto
a neighborhood $U$ of $z$ in $\overline{\cal Q}$ with the properties
stated in the lemma.
\end{proof}

The measure in the statement of the
following lemma is the Lebesgue measure.

\begin{lemma}\label{boundaryintersect}
For almost every $\omega\in {\cal Q}$, the leaf
${\cal A\cal P}(\omega)$ intersects every 
infinite core face of the principal
boundary of ${\cal Q}$ in regular points.
\end{lemma}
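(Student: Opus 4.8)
The plan is to treat the infinite core faces one at a time. Since there are at most $\lfloor g/2\rfloor$ of them, it suffices to show, for each fixed infinite core face ${\cal F}$, that the set
\[E_{\cal F}=\{\omega\in{\cal Q}:\ \overline{{\cal A\cal P}(\omega)}\cap{\cal F}\neq\emptyset\text{ and meets }{\cal F}\text{ in a regular point}\}\]
has full Lebesgue measure, and then to intersect these finitely many full measure sets. (Here $\overline{{\cal A\cal P}(\omega)}$ denotes the closure of the leaf in $\overline{\cal Q}$.) The two ingredients are Proposition \ref{coreface}, which will produce an open subset of $E_{\cal F}$, and ergodicity of the Teichm\"uller flow, which will upgrade positive measure to full measure.

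First I would produce an open set inside $E_{\cal F}$. Choose a regular point $z\in{\cal F}$ and let $\phi:V\times B(\epsilon)\to U$ be the homeomorphism of Proposition \ref{coreface}. For $\omega=\phi(x,\gamma)$ with $x\in V$ and $\gamma\in B(\epsilon)\setminus\{0\}$ the point $\omega$ lies in ${\cal Q}$, and by property (2) the entire disk $\phi(\{x\}\times B(\epsilon))$ lies on the single leaf ${\cal A\cal P}(\omega)$. Letting $\gamma'\to 0$, the points $\phi(x,\gamma')$ converge to $\phi(x,0)=x\in{\cal F}$, which is regular; hence $\overline{{\cal A\cal P}(\omega)}$ meets ${\cal F}$ in the regular point $x$. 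Thus the nonempty open set $W=U\cap{\cal Q}$ is contained in $E_{\cal F}$, and in particular $\lambda(E_{\cal F})>0$.

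Next I would invoke invariance under the Teichm\"uller flow. The flow $\Phi^t$ acts on absolute cohomology by a fixed linear map and therefore permutes the leaves of the absolute period foliation, $\Phi^t({\cal A\cal P}(\omega))={\cal A\cal P}(\Phi^t\omega)$. It extends continuously to $\overline{\cal Q}$, acting on the surface with nodes underlying a point of ${\cal F}$ by the Teichm\"uller flow on each component; hence it preserves ${\cal F}$ together with the partition of ${\cal F}$ into regular and singular points, since a node stays a node of the same type and a zero stays a zero. Consequently the property defining $E_{\cal F}$ is $\Phi^t$-invariant, so the saturation $\Phi^{\mathbb{R}}W=\bigcup_{t\in\mathbb{R}}\Phi^t W$ is an open, $\Phi^t$-invariant subset of $E_{\cal F}$ of positive measure. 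Working on the area-one locus of ${\cal Q}$, where the Teichm\"uller flow is ergodic with respect to $\lambda$ by the Masur--Veech theorem, it follows that $\Phi^{\mathbb{R}}W$, and therefore $E_{\cal F}$, has full measure. Passing to $\Phi^{\mathbb{R}}W$ rather than arguing with $E_{\cal F}$ directly also avoids having to verify measurability of $E_{\cal F}$.

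The main obstacle is the invariance step, specifically the assertion that $\Phi^t$ preserves ${\cal F}$ together with the regular/singular dichotomy. This requires the extension of the Teichm\"uller flow to the principal boundary to be compatible with the plumbing coordinates $\phi$ of Proposition \ref{coreface}: flowing $\omega=\phi(z,\gamma)$ should agree, up to reparametrization, with gluing the $\Phi^t$-images of the two component differentials $\omega_1,\omega_2$ along the transported slit, so that the limit as the slit shrinks lands again in ${\cal F}$ at a regular point. Granting this compatibility the remainder is routine, since the positive-measure open set is furnished for free by Proposition \ref{coreface} and ergodicity of $\Phi^t$ does the rest.
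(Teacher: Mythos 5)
Your overall scheme (a positive-measure set from Proposition \ref{coreface}, upgraded to full measure by ergodicity of the Teichm\"uller flow) is the same as the paper's, but the set you feed into the ergodicity argument is different, and that is exactly where your proof has a gap: the $\Phi^t$-invariance of $E_{\cal F}$ is the statement you yourself label the ``main obstacle'' and then grant rather than prove. Everything hinges on the claim that the Teichm\"uller flow extends continuously to $\overline{\cal Q}$ at regular points of ${\cal F}$, compatibly with the plumbing coordinates, so that applying $\Phi^t$ to the closure of a leaf again produces a regular boundary point; without this, the inclusion $\Phi^{\mathbb{R}}W\subset E_{\cal F}$ is unjustified and the ergodicity step yields nothing about $E_{\cal F}$. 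The missing statement is in fact true and fillable: the slit construction is $GL^+(2,\mathbb{R})$-equivariant, so that $\Phi^t\phi(z,\gamma)=\phi(\Phi^t z, g_t\gamma)$ where $g_t$ is the diagonal matrix and $\Phi^t z$ is the componentwise flow (which preserves the type $(g_1,g_2)$, the areas up to the joint normalization, and the regularity of the node); continuity at $\gamma=0$ follows. But this is a claim about the boundary topology that must be verified, not assumed, so as written the proof is incomplete.

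It is instructive that the paper's proof is organized precisely so as to avoid any statement about the flow on the boundary. Instead of $E_{\cal F}$, it takes the set of $\omega$ admitting an \emph{isolated bigon} of type $(g_1,g_2)$: a pair of homologous saddle connections joining two zeros, parallel to no other saddle connection, and decomposing $S$ into pieces of genus $g_1$ and $g_2$. This is a purely interior, combinatorial property whose $\Phi^t$-invariance is immediate, since the flow preserves saddle connections, homology, parallelism and the topological decomposition, changing only lengths and directions. Positive measure again comes from Proposition \ref{coreface}, together with countability of the set of directions of saddle connections (which guarantees that the bigon created by a generic slit is isolated), and ergodicity gives full measure. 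Only then is the boundary reached, and from \emph{inside} the leaf: one rotates $\omega$ so that the bigon is horizontal and runs the rotated REL flow $\Lambda^t_{e^{is}\mathfrak{a}}$ with $\mathfrak{a}=(-1,1)$, which is tangent to ${\cal A\cal P}(\omega)$ and collapses the bigon onto a regular point of the core face in finite time, as in Example \ref{node}. So to complete your argument you should either prove the equivariance identity above, or replace $E_{\cal F}$ by the isolated-bigon set, which removes the obstacle altogether.
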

\begin{proof} 
We say that a translation surface $\omega$ has an  
\emph{isolated bigon} of type $(g_1,g_2)$ where 
$g_1+g_2=g$ if it admits a pair of homologous saddle connections
$\alpha_1,\alpha_2$ 
connecting two zeros $p_1,p_2$ with the following property.
There is no other saddle connection parallel to 
$\alpha_i$, moreover $\alpha_1\cup\alpha_2$ decomposes
$S$ into a surface of genus $g_1$ and a surface of genus $g_2$.
Since the Teichm\"uller flow preserves saddle connections and 
only changes their direction, the set of points 
$q\in {\cal Q}$ which admit an isolated bigon of 
type $(g_1,g_2)$ is invariant under the Teichm\"uller flow.

The set of directions of a translation surface containing
a saddle connection is countable and hence of measure zero.
Thus Proposition \ref{coreface} shows that for all $g_1,g_2\geq 1$ with 
$g_1+g_2=g$, the set of all 
points $\omega\in {\cal Q}$ which admit an isolated bigon of 
type $(g_1,g_2)$ has positive Lebesgue measure
(see also \cite{EMZ03} for details).
By invariance and 
by ergodicity of the
Teichm\"uller flow on ${\cal Q}$, 
we conclude that this set has full measure.

Let $\omega\in {\cal Q}$ and assume that there is some  
$e^{is}\in S^1$ with the property that 
$e^{is}\omega$ has an isolated horizontal bigon. 
Assume that this bigon is defined by a pair of homologous
saddle connections with endpoints at the zeros 
$p_1,p_2$ of $\omega$. We assume that the points $p_1,p_2$
are ordered in such a way that the saddle connections 
connect $p_1$ to $p_2$ with respect to the
orientation defined by $\omega$.  
Then the flow line $t\to \Lambda_{\mathfrak{a}}^t(e^{is}\omega)$ 
defined by the vector $\mathfrak{a}$ with
coordinates $(-1,1)$ at the points $p_1,p_2$ 
collapses the pair of horizontal saddle connections of $e^{is}\omega$ 
to a point. This means that there is some $\tau>0$ such that
$\Lambda_{\mathfrak{a}}^t(e^{is}\omega)$ is defined for $0\leq t<\tau$, 
and the surfaces $\Lambda^t_{\mathfrak{a}}(e^{is}\omega)$
converge as $t\nearrow \tau$ 
to a surface in the infinite core face of the 
principal boundary of type $(g_1,g_2)$. 
The lemma follows.
\end{proof}

A subset of ${\cal Q}$ is \emph{saturated for the absolute period foliation}
if it is a union of leaves.

\begin{corollary}\label{saturated}
The set ${\cal S}$ of points $\omega\in {\cal Q}$ 
such that ${\cal A\cal P}(\omega)$ intersects
every infinite core face of the principal boundary of ${\cal Q}$ 
is saturated for the absolute period foliation and 
of full Lebesgue measure.
\end{corollary}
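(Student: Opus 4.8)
The plan is to deduce Corollary \ref{saturated} from Lemma \ref{boundaryintersect} in two moves: first showing that the set $\cal S$ is saturated for the absolute period foliation, and then showing it has full measure. The full-measure assertion is immediate from the lemma. Indeed, since there are only finitely many types $(g_1,g_2)$ of infinite core faces (one for each decomposition $g=g_1+g_2$ with $1\leq g_1\leq \lfloor g/2\rfloor$), and Lemma \ref{boundaryintersect} asserts that for almost every $\omega$ the leaf ${\cal A\cal P}(\omega)$ meets \emph{every} such face in regular points, the set $\cal S$ of such $\omega$ is a finite intersection of full-measure sets, hence itself of full Lebesgue measure.

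The saturation is the point that requires an argument, and it is essentially tautological once stated correctly. The defining property of a point $\omega\in\cal S$ is a property of the leaf ${\cal A\cal P}(\omega)$ rather than of $\omega$ itself: namely that this leaf intersects every infinite core face of the principal boundary. Two differentials lying on the same leaf determine the same leaf, so the condition depends only on the leaf through $\omega$. Concretely, I would argue: if $\omega\in\cal S$ and $\omega'$ lies on the same leaf ${\cal A\cal P}(\omega')={\cal A\cal P}(\omega)$, then the leaf through $\omega'$ is literally the same subset of $\cal Q$ as the leaf through $\omega$, so it intersects every infinite core face in regular points for exactly the same reason. Hence $\omega'\in\cal S$, and the whole leaf ${\cal A\cal P}(\omega)$ is contained in $\cal S$. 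Since $\cal S$ thus contains every leaf it meets, it is a union of leaves, i.e. saturated for the absolute period foliation.

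The main (and only mild) obstacle is bookkeeping rather than mathematics: one must be careful that the phrase ``intersects every infinite core face in regular points'' is genuinely a leaf-invariant and not secretly attached to the marked zeros or to a choice of lift to $\hat{\cal Q}$. Since regularity of an intersection point is a property of that boundary point (whether the node is a zero of the differential on a component surface), and the collection of infinite core faces is intrinsic to $\overline{\cal Q}$, the condition does not reference $\omega$ except through its leaf. Once this is observed, saturation follows formally and the corollary is complete.

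\begin{proof}
By definition, membership of $\omega$ in $\cal S$ depends only on the leaf ${\cal A\cal P}(\omega)$: it is the condition that this leaf intersect every infinite core face of the principal boundary of $\cal Q$ in regular points. Thus if $\omega\in\cal S$ and $\omega'\in{\cal A\cal P}(\omega)$, then ${\cal A\cal P}(\omega')={\cal A\cal P}(\omega)$ and hence $\omega'\in\cal S$ as well. Therefore $\cal S$ is a union of leaves of the absolute period foliation, i.e. it is saturated.

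There are only finitely many types $(g_1,g_2)$ of infinite core faces, one for each decomposition $g=g_1+g_2$ with $g_1,g_2\geq 1$. By Lemma \ref{boundaryintersect}, for each such type the set of $\omega\in\cal Q$ whose leaf ${\cal A\cal P}(\omega)$ intersects the corresponding face in regular points has full Lebesgue measure. As $\cal S$ is the intersection of these finitely many full-measure sets, it has full Lebesgue measure.
\end{proof}
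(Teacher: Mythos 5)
Your proposal is correct and matches the paper's intent exactly: the paper states this corollary without proof, treating it as immediate from Lemma \ref{boundaryintersect}, and your write-up simply makes the two trivial steps explicit (membership in ${\cal S}$ depends only on the leaf, hence saturation; the full-measure set of the lemma is contained in ${\cal S}$, hence full measure). No gaps; your finite-intersection remark over the types $(g_1,g_2)$ is harmless bookkeeping that the lemma's statement already subsumes.
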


A smooth foliation of ${\cal Q}$ is \emph{ergodic} for the
Lebesgue measure if every Borel set $A\subset {\cal Q}$ which 
is saturated for the foliation 
either has full measure or measure zero.

A finite core face of the principal boundary of a
stratum is a component of a stratum. Hence if $g\geq 3$ and if 
this stratum has more than
one zero, then the absolute period foliation is defined. 
For an infinite core face of the principal boundary, the absolute period
foliation is defined as well. Namely, 
such an infinite core face is determined 
by  closed surfaces $S_1,S_2$ of genus
$g_1\geq 1,g_2\geq 1$ and $g_1+g_2=g$. 
Write $S_1\sqcup S_2$ for the surface obtained by attaching
$S_1$ and $S_2$ at a single point, viewed as a surface with a node.
The moduli spaces of abelian differentials on 
$S_1,S_2$ determine a moduli space of abelian
differentials on $S_1\sqcup S_2$ and an absolute period foliation.
We require that the area of a 
differential on $S_1\sqcup S_2$ equals one and
hence the areas of $S_1$ and $S_2$ add up to one. 
In particular, the principal stratum in the 
moduli space of abelian differentials 
on $S_1\sqcup S_2$ decomposes as
$\cup_{a\in (0,1)} 
{\cal Q}_{S_1\sqcup S_2}(a,1-a)$ 
where a differential in ${\cal Q}_{S_1\sqcup S_2}(a,1-a)$ gives area 
$a$ to $S_1$.
Note that for each $a\in (0,1)$, ${\cal Q}_{S_1\sqcup S_2}(a,1-a)$ is a real 
hypersurface in the moduli space of all area one
abelian differentials on $S_1\sqcup S_2$.

\begin{lemma}\label{ergodicone}
If the absolute period foliation of the principal stratum
of $S_1,S_2$ is ergodic then so is the absolute period foliation
of $ {\cal Q}_{S_1\sqcup S_2}(a,1-a)$. 
\end{lemma}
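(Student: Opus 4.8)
The plan is to exhibit ${\cal Q}_{S_1\sqcup S_2}(a,1-a)$ as a genuine product of two spaces on each of which the absolute period foliation is, after a harmless normalization, the foliation of a principal stratum, and then to deduce ergodicity of a product foliation from ergodicity of its two factors. First I would record the product structure. A point of ${\cal Q}_{S_1\sqcup S_2}(a,1-a)$ consists of abelian differentials $\omega_1,\omega_2$ of areas $a,1-a$ on $S_1,S_2$ together with the two points $p_1\in S_1$, $p_2\in S_2$ that are glued to form the node; since the node carries no holomorphic data beyond its location, this identifies ${\cal Q}_{S_1\sqcup S_2}(a,1-a)$ with the product ${\cal H}_1(a)\times {\cal H}_2(1-a)$, where ${\cal H}_i(b)$ denotes the principal stratum of area $b$ abelian differentials on $S_i$ carrying one marked regular point. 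As $S_1\sqcup S_2$ is a one point union, $H_1(S_1\sqcup S_2)=H_1(S_1)\oplus H_1(S_2)$ and the node contributes nothing to absolute homology, so the absolute periods of the glued differential are exactly the pair of absolute periods of $\omega_1$ and of $\omega_2$. Hence the absolute period foliation of the product is the product foliation ${\cal A\cal P}({\cal H}_1(a))\times {\cal A\cal P}({\cal H}_2(1-a))$; in particular both moving a marked point $p_i$ over $S_i$ and applying Schiffer variations in either factor keep one inside a single leaf.

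Next I would reduce each factor to the principal stratum of $S_i$ itself. Multiplication by a real scalar rescales area and carries absolute period leaves to absolute period leaves, so ${\cal A\cal P}({\cal H}_i(b))$ is ergodic if and only if the normalized (area one) version is, and the value of $a$ becomes immaterial. To remove the marked point I would use the forgetful map $\pi_i$ from ${\cal H}_i$ onto the principal stratum ${\cal Q}(S_i)$: moving the marked regular point along any path alters relative, but not absolute, periods, so each fibre of $\pi_i$ (a connected surface punctured at its zeros) lies in a single leaf, whence the $\pi_i$-preimage of an ${\cal A\cal P}({\cal Q}(S_i))$-leaf is one ${\cal A\cal P}({\cal H}_i)$-leaf. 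Consequently the saturated sets of ${\cal H}_i$ are precisely the $\pi_i$-preimages of the saturated sets of ${\cal Q}(S_i)$, and as Lebesgue measure disintegrates over $\pi_i$ with the full fibrewise measure, the hypothesis that ${\cal A\cal P}({\cal Q}(S_i))$ is ergodic yields ergodicity of ${\cal A\cal P}({\cal H}_i)$ for each $i$.

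Finally, with both factor foliations ergodic, I would prove ergodicity of the product by a Fubini argument. Let $A$ be a Borel set saturated for the product foliation, with all statements understood modulo the measure zero set where leaves are incomplete. The decisive point is that the leaf through $(x_1,x_2)$ equals $L_1(x_1)\times L_2(x_2)$ and is traversed by moving first in the first and then in the second factor, so saturation of $A$ for the product foliation is equivalent to coordinatewise saturation. Thus for almost every $x_1$ the slice $A_{x_1}=\{x_2:(x_1,x_2)\in A\}$ is saturated for ${\cal A\cal P}({\cal H}_2(1-a))$ and hence, by ergodicity of the second factor, is null or conull; moreover the set $B$ of those $x_1$ with $A_{x_1}$ conull is itself saturated for ${\cal A\cal P}({\cal H}_1(a))$, since $A_{x_1}\subseteq A_{x_1'}$ whenever $x_1'\in L_1(x_1)$. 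Ergodicity of the first factor then forces $B$ to be null or conull, and Fubini converts this into the statement that $A$ itself has measure zero or full measure.

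The step I expect to be the genuine obstacle is this last one: a product of two ergodic systems is in general \emph{not} ergodic, and the slicing argument succeeds only because for the product foliation one may move independently in the two coordinates, so that a leaf is an honest product. I would therefore take care to justify rigorously that saturation for the absolute period foliation decouples into the two factors, which is exactly where the splitting $H_1(S_1\sqcup S_2)=H_1(S_1)\oplus H_1(S_2)$ from the first step does the essential work.
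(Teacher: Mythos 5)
Your proposal is correct and follows essentially the same route as the paper: both identify ${\cal Q}_{S_1\sqcup S_2}(a,1-a)$ with a product via the node/marked-point forgetting fibration, note that its fibres lie inside leaves of the absolute period foliation and that Lebesgue measure is locally a product of fibre and base measures, and then deduce ergodicity from ergodicity of the factors. The only differences are organizational: the paper forgets both marked points through a single map $P$ onto ${\cal Q}_{S_1}(a_1)\times {\cal Q}_{S_2}(a_2)$ and leaves the final product-ergodicity step implicit, whereas you split off the marked points factor by factor and spell out the Fubini slicing argument, correctly emphasizing that it works because leaves of the product foliation are honest products $L_1\times L_2$ (unlike diagonal product dynamics).
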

\begin{proof} Write $a_1=a$ and $a_2=1-a$. Then 
${\cal Q}_{S_1\sqcup S_2}(a_1,a_2)$ 
is the space of pairs 
$((\omega_1,p_1),(\omega_2,p_2))$ 
where $\omega_i$ is an abelian differential on $S_i$ of area $a_i$ 
with simple zeros  
and a marked point $p_i$ (here the node
is at the marked point). 

Let 
${\cal Q}_{S_i}(a_i)$ be the principal stratum in the
moduli space of abelian differentials
on $S_i$ of area $a_i$. There is a natural node forgetting map
\[P:{\cal Q}_{S_1\sqcup S_2}(a_1,a_2)\to 
{\cal Q}_{S_1}(a_1)\times {\cal Q}_{S_2}(a_2).\]
This map is a fibration whose 
fibre over a point $(\omega_1,\omega_2)$ 
can naturally be identified with the product 
$(S_1,\omega_1)\times (S_2,\omega_2)$ (it consists of the pair of 
marked points) 
and hence it is equipped with 
a natural Lebesgue measure (the product of the Lebesgue measures 
defined by the differentials $\omega_i$ on the surfaces $S_i$).
The fibration respects absolute periods and therefore 
if $\omega\in {\cal Q}_{S_1\sqcup S_2}(a_1,a_2)$ then $P^{-1}(P\omega)\in 
{\cal A\cal P}(\omega)$.

The Lebesgue measure on
$ {\cal Q}_{S_1\sqcup S_2}(a_1,a_2)$ can locally be described as 
a product of the Lebesgue measure
on the fibre and the Lebesgue measure on 
the base (see \cite{EMZ03} for details).
As a consequence, a Borel set 
$A\subset {\cal Q}_{S_1\sqcup S_2}(a_1,a_2)$ 
which is saturated for the absolute period foliation
maps to a Borel subset of ${\cal Q}_{S_1}(a_1)\times {\cal Q}_{S_2}(a_2)$ which is
saturated for the absolute period 
foliation, and it coincides with
$P^{-1}(PA)$ up to a set of measure zero. Thus ergodicity of the
absolute period 
foliation on ${\cal Q}_{S_i}(a_i)$ implies ergodicity of the
absolute period foliation on $ {\cal Q}_{S_1\sqcup S_2}(a_1,a_2)$.
\end{proof}

As an immediate consequence we obtain

\begin{corollary}\label{structure}
Let $A\subset {\cal Q}$ be a Borel set which is saturated for the
absolute period foliation and of positive Lebesgue measure.
Let $U=V\times B(\epsilon)$ 
be a standard neighborhood in 
an infinite core face ${\cal F}$ defined by a surface with nodes
$S_1\sqcup S_2$. If the absolute period foliation of the principal 
stratum of 
$S_1,S_2$ is ergodic then there is a Borel set 
$C\subset (0,1)$ of positive Lebesgue measure such that
up to a set of measure zero we have
\[A\cap U=(\cup_{s\in C}{\cal Q}_{S_1\sqcup S_2}(s,1-s)\cap V)\times B(\epsilon).\]
\end{corollary}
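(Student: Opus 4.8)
The plan is to combine the product structure of the standard neighborhood $U=V\times B(\epsilon)$ furnished by Proposition \ref{coreface} with the foliation-product description of the Lebesgue measure used in the proof of Lemma \ref{ergodicone}, and then invoke ergodicity of the absolute period foliation of the principal stratum of $S_1,S_2$. First I would observe that by Proposition \ref{coreface}, for each $x\in V$ the slice $\phi(\{x\}\times B(\epsilon))$ lies inside a single leaf ${\cal A\cal P}(x)$. Since $A$ is saturated for the absolute period foliation, once a point $\phi(x,\gamma)$ lies in $A$ the entire disk $\phi(\{x\}\times B(\epsilon))$ must, up to the measure-zero boundary issues, lie in $A$ as well. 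Hence $A\cap U$ is, up to measure zero, of the form $(A'\cap V)\times B(\epsilon)$ for some subset $A'\subset {\cal F}$, where $A'$ is itself saturated for the absolute period foliation of the core face.

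Next I would analyze $A'$ using the decomposition of the infinite core face as $\cup_{a\in(0,1)}{\cal Q}_{S_1\sqcup S_2}(a,1-a)$. The key point is that the absolute period foliation on ${\cal F}$ preserves the area parameter $a$ (the areas of the two components are absolute-period data), so each hypersurface ${\cal Q}_{S_1\sqcup S_2}(a,1-a)$ is itself saturated, and the restriction of the foliation to it is exactly the foliation treated in Lemma \ref{ergodicone}. Applying Lemma \ref{ergodicone}, the absolute period foliation of each ${\cal Q}_{S_1\sqcup S_2}(a,1-a)$ is ergodic. Therefore, writing $A'_a=A'\cap {\cal Q}_{S_1\sqcup S_2}(a,1-a)$, each $A'_a$ is a saturated Borel set in an ergodic foliated space and so has either full or zero measure within that hypersurface. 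Disintegrating the Lebesgue measure on ${\cal F}$ over the parameter $a\in(0,1)$ and setting $C=\{a\in(0,1):A'_a \text{ has full measure}\}$ then yields, up to measure zero, the claimed description of $A'$ and hence of $A\cap U$.

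Finally I would check that $C$ has positive measure. This follows because $A$ has positive Lebesgue measure and $U$ is an open neighborhood of a regular boundary point; by Lemma \ref{boundaryintersect} and Corollary \ref{saturated} the leaves of a full-measure saturated set reach every infinite core face in regular points, so the saturation of $A$ forces $A\cap U$ to carry positive measure for a suitable choice of neighborhood, whence the fiber Fubini computation gives $\lambda(C)>0$. Here I would use the local product description of Lebesgue measure on ${\cal Q}_{S_1\sqcup S_2}(a_1,a_2)$ as a product of fiber and base measures, exactly as in Lemma \ref{ergodicone}, together with a further disintegration of the base measure over $a$.

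The main obstacle I expect is the careful bookkeeping of the ``up to a set of measure zero'' statements: the map $\phi$ is only a homeomorphism (not measure-preserving on the nose), the boundary $B(\epsilon)$ contributes its boundary circle, and the individual hypersurfaces ${\cal Q}_{S_1\sqcup S_2}(a,1-a)$ are themselves null for the ambient Lebesgue measure, so one must pass to the correct disintegrated measures before applying ergodicity fiberwise. Ensuring that the null-sets from saturation, from the non-regular (singular) locus of the core face, and from the Fubini-type disintegration all line up into a single coherent ``up to measure zero'' identity is the delicate part; the ergodicity input from Lemma \ref{ergodicone} is then essentially immediate once the measure-theoretic framework is set up correctly.
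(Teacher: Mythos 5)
Your proposal follows essentially the same route as the paper: Proposition \ref{coreface} supplies the local product structure $(Z\cap V)\times B(\epsilon)$ with $Z$ saturated in the core face, and Lemma \ref{ergodicone}, applied on each hypersurface ${\cal Q}_{S_1\sqcup S_2}(a,1-a)$ (each of which is indeed saturated, since the component areas are absolute-period data), gives the zero-one law that produces the set $C$. Your extra care with the disintegration over the area parameter and with the positivity of $\lambda(C)$ only spells out what the paper leaves implicit, so the two arguments coincide in substance.
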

\begin{proof} By Proposition \ref{coreface}, up to a set of 
measure zero the set $A$ intersects $U$ in a set of the form
$(Z\cap V)\times B(\epsilon)$ where $Z\subset {\cal F}$ is 
a Borel set which is saturated for the absolute period foliation.
If the absolute period foliation of $S_1,S_2$ is ergodic then
Lemma \ref{ergodicone} shows that there is a Borel set
$C\subset (0,1)$ of positive measure such that
$Z=\cup_{s\in C} {\cal Q}_{S_1\sqcup S_2}(s,1-s)$ as claimed. 
\end{proof}

We use Lemma \ref{ergodicone} to show

\begin{proposition}\label{relergodic}
The absolute period foliation ${\cal A\cal P}({\cal Q})$ 
of the principal stratum in genus $g\geq 2$ is ergodic.
\end{proposition}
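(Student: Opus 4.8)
The plan is to prove Proposition~\ref{relergodic} by induction on the genus $g$, using the core-face structure of the principal boundary as the mechanism that reduces ergodicity in genus $g$ to ergodicity in strictly smaller genera. The base cases $g=2,3$ are already covered by the results cited in the introduction (McMullen \cite{McM14}), so I would take $g\geq 4$ and assume as inductive hypothesis that the absolute period foliation of the principal stratum is ergodic in every genus strictly between $1$ and $g$. The inductive step must combine a measure-zero-or-full dichotomy with the fact, established in Corollary~\ref{saturated}, that almost every leaf reaches every infinite core face of the boundary in regular points.

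First I would fix a Borel set $A\subset{\cal Q}$ that is saturated for the absolute period foliation and has positive Lebesgue measure, and aim to show $A$ has full measure. By Corollary~\ref{saturated} the saturated set ${\cal S}$ of differentials whose leaf meets every infinite core face in regular points has full measure, so I may intersect and assume $A\subset{\cal S}$. Now pick a core face ${\cal F}$ corresponding to a decomposition $g=g_1+g_2$ with $1\leq g_1,g_2<g$, and a standard neighborhood $U=V\times B(\epsilon)$ around a regular point as in Proposition~\ref{coreface}. Since $g_1,g_2<g$, the inductive hypothesis gives ergodicity of the absolute period foliation of the principal strata of $S_1$ and $S_2$; hence Corollary~\ref{structure} applies and describes $A\cap U$, up to measure zero, as $(\cup_{s\in C}{\cal Q}_{S_1\sqcup S_2}(s,1-s)\cap V)\times B(\epsilon)$ for a positive-measure set $C\subset(0,1)$. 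The crucial point is that $A$ has positive measure in the neighborhood of the boundary face, which follows because almost every point of $A$ has a leaf limiting into ${\cal F}$ and the boundary measure disintegrates compatibly (Proposition~\ref{coreface} together with the product structure of Lebesgue measure).

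The heart of the argument, and the step I expect to be the main obstacle, is upgrading the positive-measure subset $C\subset(0,1)$ of area-parameters to full measure, and then transporting this conclusion back from the boundary into the interior of ${\cal Q}$. The area parameter $s$ records the fraction of the total area carried by $S_1$, and the real REL flow together with the Teichm\"uller flow and the circle action move differentials across different values of $s$. Specifically, I would exploit Lemma~\ref{stable}, which shows the real REL foliation is invariant under the Teichm\"uller flow and under strong-stable holonomy, together with Lemma~\ref{invariance2}, to argue that the set of admissible area ratios realized inside a saturated positive-measure set must be invariant under a transitive-enough family of transformations on the parameter interval; ergodicity of the Teichm\"uller flow on ${\cal Q}$ then forces $C$ to have full measure, so that $A$ fills a full-measure neighborhood of the core face ${\cal F}$.

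Finally, once $A$ is shown to contain, up to measure zero, a full neighborhood of some infinite core face, I would spread this full-measure property through all of ${\cal Q}$. Because $A$ is saturated and almost every leaf reaches every core face (Corollary~\ref{saturated}), a differential $\omega$ whose leaf meets a core face in the full-measure region must itself lie in $A$ up to measure zero; thus the complement $ {\cal Q}\setminus A$, which is also saturated, cannot have positive measure, for otherwise applying the same boundary analysis to ${\cal Q}\setminus A$ would force \emph{both} $A$ and its complement to fill neighborhoods of the same core face, a contradiction. This yields the dichotomy and establishes ergodicity in genus $g$, completing the induction. The delicate bookkeeping will be ensuring that all the ``up to measure zero'' identifications are compatible and that the disintegration of Lebesgue measure near the boundary genuinely lets positive interior measure register as positive measure on the boundary face.
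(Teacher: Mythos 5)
Your skeleton (induction on genus, passage to a core face via Proposition~\ref{coreface} and Corollary~\ref{structure}, then a complement argument to spread full measure) matches the paper, but the step you yourself flag as ``the heart of the argument'' --- upgrading the positive-measure set $C\subset(0,1)$ of area ratios to full measure --- is a genuine gap, and the mechanism you propose cannot close it. None of the transformations you invoke moves the parameter $s$. The area of each component of a noded surface is determined by its absolute periods (Riemann's bilinear relation), so every flow tangent to the absolute period foliation, in particular every REL flow $\Lambda^t_{\mathfrak{a}}$, preserves the areas of $S_1$ and $S_2$ separately; the Teichm\"uller flow and the circle action preserve these areas as well. This is exactly why Lemma~\ref{ergodicone} yields ergodicity only on each hypersurface ${\cal Q}_{S_1\sqcup S_2}(s,1-s)$ and not on the whole face: $s$ \emph{is} an invariant of the absolute period foliation of the face, so no amount of Lemma~\ref{stable} and Lemma~\ref{invariance2} produces a ``transitive-enough family of transformations on the parameter interval.'' Nor can you appeal to ergodicity of the Teichm\"uller flow: a set saturated for the absolute period foliation is not flow-invariant (the flow permutes leaves), and in the paper that ergodicity is used only in Lemma~\ref{boundaryintersect}, where the relevant set (surfaces admitting an isolated bigon of fixed type) genuinely is flow-invariant.

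The paper's actual mechanism is different and is what your single-node picture is missing: it degenerates \emph{twice}, to a surface $S_1\sqcup S_2\sqcup S_3$ with two nodes (genera $2$, $g-4$, $2$), with a standard neighborhood $V\times B(\epsilon)\times B(\epsilon)$, and then regroups the same configuration as a one-node degeneration in two different ways: $S_1$ against a smooth surface $\Sigma$ of genus $g-2$ (inside which $S_2\sqcup S_3$ sits as a deeper degeneration), and symmetrically $S_3$ against $S_1\sqcup S_2$. Applying the induction hypothesis to each grouping shows that the admissible set $D_0$ in the simplex $\{a_1+a_2+a_3=1\}$ depends only on $a_1$ (equation~(\ref{right})) and also only on $a_3$ (equation~(\ref{left})), and the essinf/essup argument then forces $C_1=C_3=(0,1)$, i.e.\ $D_0=D$. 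The point is that the ratio is redistributed not by any dynamics but by the induction hypothesis applied to $\Sigma$: the absolute period foliation of the principal stratum of $\Sigma$ has no area-ratio invariant, so its ergodicity moves area freely between $S_2$ and $S_3$ at fixed $a_1$. This is also why the paper's induction steps from $g-2$ and $g-4$ and must treat $g=4,5$ separately using degenerations into tori, whereas your induction scheme (all genera below $g$, single splitting) never encounters --- and never resolves --- this issue. Your final spreading/complement step and your concern about positive interior measure registering as positive measure on the face are both consistent with the paper (the latter rests on the product structure of the Lebesgue measure from Lemma~9.8 of \cite{EMZ03}), but without the two-node regrouping the inductive step does not go through.
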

\begin{proof} We use induction on the genus $g$ of $S$.
The case $g=2,3$ is due to McMullen \cite{McM14}.
Let $g\geq 6$ and assume that the proposition holds true for
$g-4$ and for $g-2$.

Let ${\cal Q}$ be the principal stratum of abelian differentials
on a surface of genus $g$. 
Let $A\subset {\cal Q}$ be a Borel subset which 
is saturated for the absolute period foliation and which is
of positive Lebesgue measure. Then the same holds true for 
$A\cap {\cal S}$ where ${\cal S}\subset {\cal Q}$ is as
in Corollary \ref{saturated}.

Let $S_1,S_3$ be a surface of genus two and let 
$S_1\sqcup S_2\sqcup S_3$ be the surface with two nodes obtained
by attaching $S_1,S_3$ to a surface $S_2$ of genus $g-4$ at a single point each.
Lift ${\cal Q}$ to a finite cover $\hat{\cal Q}$ so that
this configuration determines a subset of $\hat{\cal Q}$ as follows.
Let ${\cal Z}$ be the space of all area one abelian differentials 
on $S_1\sqcup S_2\sqcup S_3$. Note that 
\[{\cal Z}=\cup_{a_i>0,a_1+a_2+a_3=1}{\cal Z}(a_1,a_2,a_3)\]
where an abelian differential in the space
${\cal Z}(a_1,a_2,a_3)$ gives area $a_i$ to $S_i$.
We require that 
there is an open subset $U$ of 
$\overline{\hat{\cal Q}} $ of the form
\[U=V\times B(\epsilon)\times B(\epsilon)\]
with $V\subset {\cal Z}$ open and such that for each 
$x\in V$ the set $\{x\}\times B(\epsilon)\times B(\epsilon)$
is contained in a leaf of the absolute period foliation.
The existence of such an open set $U$ follows from two 
applications of 
Proposition \ref{coreface} and the requirement that
we can distinguish the two surfaces $S_1,S_3$ in the principal
boundary of $\hat{\cal Q}$ (i.e. they are not identified by
an element of the mapping class group).

As before, we conclude that 
since $A$ is saturated for the absolute period foliation,
there is a Borel subset ${\cal Z}_A$ of ${\cal Z}$ which is saturated for the
absolute period foliation and such that up to a set
of measure zero, 
the intersection of $A$ with $V\times B(\epsilon)\times B(\epsilon)$
equals 
\[(V\cap {\cal Z}_A)\times B(\epsilon)\times B(\epsilon)
\cap \hat{\cal Q}.\]

By induction hypothesis, the absolute period foliation on 
$S_i$ is ergodic. Note that as we assume that $g\geq 6$, the 
genus of $S_2$ is at least two.
By Lemma \ref{ergodicone}, this implies that
there is a Borel subset $D_0$ of the set
\[D=\{(a_1,a_2,a_3)\mid a_i>0,a_1+a_2+a_3=1\}\]
such that up to a set of measure zero, we have
${\cal Z}_A=\cup_{x\in D_0}\hat{\cal Q}_{S_1\sqcup S_2\sqcup S_3}(x)$.
Moreover, the Lebesgue measure of $D_0$ is positive.

Let $\Sigma$ be a surface of genus $g-2$
(which should be viewed as
a component of a surface with a single node in the 
Deligne Mumford compactification of the moduli space of $S$
whose second component is the surface $S_1$).
For each 
$a<1$, the surface with nodes 
$S_2\sqcup S_3$ determines an infinite core face ${\cal G}(1-a)$ 
of the moduli space ${\cal H}_\Sigma(1-a)$ 
of abelian differentials on $\Sigma$ of area $1-a$.
By Corollary \ref{structure} and the induction
hypothesis, applied to $S_1$ and $\Sigma$,
there is a Borel set $C_1\subset (0,1)$ such that 
\[A\cap U=\bigl(\cup_{a\in C_1}
(\hat {\cal Q}_{S_1\sqcup \Sigma}(a,1-a)\cap 
V\times B(\epsilon))\times B(\epsilon)\bigr)\cap \hat{\cal Q}.\]
As a consequence, the set $D_0$ is of the form
\begin{equation}\label{right}
D_0=\{(a_1,a_2,a_3)\mid a_1\in C_1,a_1+a_2+a_3=1\}.
\end{equation}
Exchanging the roles of $S_1$ and $S_3$ shows that on the other
hand, there is a Borel set $C_3\subset (0,1)$ such that
\begin{equation}\label{left}
B_0=\{(a_1,a_2,a_3)\mid a_3\in C_3,a_1+a_2+a_3=1\}.
\end{equation}

Let $\lambda$ be the Lebesgue measure on $(0,1)$.
Define ${\rm essup}(C_i)=\sup \{a>0\mid 
\lambda(C_i\cap [a,1])>0\}\in (0,1]$ 
and ${\rm essinf}(C_i)=\inf\{a>0\mid 
\lambda(C_i\cap [0,a])>0\}\in [0,1)$
$(i=1,3)$. It follows from 
equation (\ref{right}) and equation (\ref{left}) that 
$\lambda$-almost every $b<1-{\rm essinf}(C_1)$ is contained in 
$C_3$ (since on the one hand, we can make $a_2$
as small as we wish, on the other hand, 
for fixed $a\in C_1$ we can make $a_2$ as close
to $1-a$ as we wish).
As a consequence, the set $C_3$ is of the form 
$(0,c)$ for a number $c>0$, in particular 
we have ${\rm essinf}(C_3)=0$. 
By symmetry, we conclude that ${\rm essinf}(C_1)=0$ as well and 
hence by the beginning of this paragraph, $C_3=(0,1)=C_1$.
As a consequence, $D_0=D$ which shows
that $A$ is of full Lebesgue measure.
Ergodicity of the absolute period foliation is an immediate
consequence.

The proposition follows by induction if we can 
show ergodicity of the absolute period foliation for $g=4$ and $g=5$. 

We begin with the case $g=4$.
To this end consider a core face of the principal boundary defined by a surface
with nodes $S_1\sqcup S_2$ where $S_i$ is a surface of genus $2$.

Let $A\subset {\cal Q}$ be a Borel set saturated for the absolute period 
foliation and of positive
Lebesgue measure. We use a neighborhood in standard form of a core face of the
principal boundary to deduce that there is a Borel set $C_1\subset (0,1)$ so that
\[A\cap U=(\cup_{a\in C_1}\hat {\cal Q}_{S_1\sqcup S_2}(a,1-a)\cap V)\times B(\epsilon).\]

The principal boundary of the
surface $S_1$ has an infinite core face
which consists in surfaces with a separating node given by
a decomposition of $S_1$ into two tori $T_1,T_2$. The node is a regular point on each
torus. The absolute period foliation is the 
foliation defined by moving the marked point
over the torus. Thus as before, for a fixed number $s>0$, 
there is a forgeful projection 
$P_{\cal T}$ of the moduli space ${\cal T}(s)$ of tori of area 
$s$ with a marked point onto the moduli space of tori
of area $s$ without marked point. 
A Borel set in ${\cal T}(s)$ which is saturated
for the absolute period foliation is the preimage 
under $P_{\cal T}$ of a Borel set $B(s)$ of 
the moduli space of tori of area $s$. 

As a consequence, if we denote again by 
\[P:\cup_{s\in (0,1)}\cup_s{\cal Q}_{T_1\sqcup \Sigma}(s,1-s)\to 
{\cal Q}_{T_1}(s)\times {\cal Q}_\Sigma(1-s)\]
the natural projection where $\Sigma$ is of genul $3$, then 
the set $A$ intersects
a neighborhood of the core face defined by $T_1\sqcup \Sigma$ 
in a set of the form
\[\cup_sP^{-1}B(s)\times {\cal Q}_\Sigma(s)).\]

However, using ergodicity of the absolute period 
foliation for surfaces of genus $2$, we know that
for each $s$, either $B(s)$ is of full measure or of measure zero. 
By this observation, we
can use the above argument in the case $g=4$ as well.

The proof for $g=5$ is completely analogous and will be omitted.
\end{proof}

\section{The principal boundary of  affine invariant manifolds}

The goal of this section is to study 
the intersection of an \emph{affine
invariant submanifold} of a stratum ${\cal Q}$ with $k\geq 2$ zeros
with a leaf of the absolute period foliation. 

To this end call a connected subset $B$ of a 
leaf of ${\cal A\cal P}({\cal Q})$ \emph{complex affine}
if each point $p\in B$ has an open neighborhood $U$ which 
in affine coordinates is an open subset of a complex affine subspace. 
This is equivalent to stating that $B$ is a smooth submanifold of 
a leaf of ${\cal A\cal P}({\cal Q})$ 
whose tangent bundle $TB$ is invariant under the
complex structure and the real structure and whose 
lift to $\hat{\cal Q}$ is 
invariant under all flows $\Lambda^t_{e^{is}\mathfrak{a}}$ 
whenever $e^{is}X_{\mathfrak{a}}\in TB$.
Here as before, $\hat{\cal Q}$ is a finite cover of 
${\cal Q}$ so that the zeros of differentials in $\hat{\cal Q}$ 
are numbered. Moreover, for any vector $\mathfrak{a}\in \mathbb{R}^k$
of zero mean, $X_{\mathfrak{a}}$ is the Schiffer variation defined by
$\mathfrak{a}$.

The \emph{rank} of an affine invariant manifold ${\cal C}$ is defined by
\[{\rm rk}({\cal C})=\frac{1}{2}{\rm dim}(pT{\cal C})\]
where $p$ is the projection of period coordinates into absolute cohomology.

\begin{lemma}\label{affinelemma}
Let ${\cal C}\subset {\cal Q}$ be an affine invariant submanifold.
Then for every $\omega\in {\cal C}$, the intersection 
${\cal C}\cap {\cal A\cal P}(\omega)$ is a complex affine subspace of 
complex dimension ${\rm dim}({\cal C})-2{\rm rk}({\cal C})$.
\end{lemma}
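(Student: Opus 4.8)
The plan is to reduce the whole statement to linear algebra in a local period coordinate chart around $\omega$. In such a chart $T_\omega{\cal Q}$ is identified with the relative cohomology $H^1(X,Z(\omega),\mathbb{C})$, a nearby differential being recorded by its relative cohomology class, and the absolute period map $p$ is the linear restriction homomorphism $H^1(X,Z(\omega),\mathbb{C})\to H^1(X,\mathbb{C})$. In this trivialization two things are affine. The leaf ${\cal A\cal P}(\omega)$ is the slice $[\omega]+\ker p$, since it is precisely the level set of the linear map $p$ through $[\omega]$; and ${\cal C}$ is the slice $[\omega]+W$, where $W=T_\omega{\cal C}$, by the structure theorem that an affine invariant manifold is locally cut out in period coordinates by $\mathbb{R}$-linear equations and has $\mathbb{C}$-linear tangent space. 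Consequently the intersection ${\cal C}\cap {\cal A\cal P}(\omega)$ is, near $\omega$, the affine slice $[\omega]+(W\cap\ker p)$, an open subset of a complex affine subspace.

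First I would record that $W\cap\ker p$ is a complex subspace invariant under the real structure: it is $J$-invariant because $W$ is and $\ker p$ is ($p$ being $\mathbb{C}$-linear), and it is invariant under the real structure for the same reason, both $W$ and $\ker p$ being defined over $\mathbb{R}$. The affine-coordinate description of the previous paragraph is, by the equivalence built into the definition of \emph{complex affine}, already enough to conclude that the component of ${\cal C}\cap{\cal A\cal P}(\omega)$ through $\omega$ is complex affine; in particular the invariance under the flows $\Lambda^t_{e^{is}\mathfrak{a}}$ is handed to us by that equivalence and need not be verified by hand. The dimension then falls out of rank-nullity applied to $p|_W\colon W\to H^1(X,\mathbb{C})$: its kernel is exactly $W\cap\ker p=T_\omega({\cal C}\cap{\cal A\cal P}(\omega))$, while its image is $pT{\cal C}$, of complex dimension $2\,{\rm rk}({\cal C})$ by definition of the rank. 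Hence ${\rm dim}_{\mathbb{C}}(W\cap\ker p)={\rm dim}_{\mathbb{C}}W-2\,{\rm rk}({\cal C})={\rm dim}({\cal C})-2\,{\rm rk}({\cal C})$. Since ${\rm dim}(pT{\cal C})$ is locally constant on ${\cal C}$, this value does not jump as $\omega$ moves, so the intersection is a bona fide submanifold of the asserted dimension and not merely a set.

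The step requiring the most care, and which I regard as the crux, is the claim that ${\cal C}$ and ${\cal A\cal P}(\omega)$ are genuinely affine — not merely tangent to linear subspaces — in one common period chart, so that their set-theoretic intersection really is $[\omega]+(W\cap\ker p)$ with tangent space $W\cap\ker p$. For the leaf this is transparent. For ${\cal C}$ it rests on the foundational fact that affine invariant manifolds are locally linear in period coordinates; once this and the $\mathbb{C}$-linearity of $T{\cal C}$ are invoked, the intersection is clean, and the rank-nullity count above produces a complex affine subspace of complex dimension ${\rm dim}({\cal C})-2\,{\rm rk}({\cal C})$.
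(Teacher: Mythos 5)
Your proof is correct, but it takes a genuinely different route from the paper's. You work in a single period chart and reduce everything to linear algebra: the leaf is the fibre $[\omega]+\ker p$ of the restriction map $p$, the manifold $\mathcal{C}$ is locally $[\omega]+T_\omega\mathcal{C}$ with $T_\omega\mathcal{C}$ real-linear in period coordinates (the Eskin--Mirzakhani--Mohammadi definition) and $J$-invariant (a theorem of Avila--Eskin--M\"oller, not part of the definition, so it deserves an explicit citation), and the dimension count is rank--nullity for $p|_{T_\omega\mathcal{C}}$. The paper never invokes linearity of $\mathcal{C}$ in period coordinates; its proof is dynamical: starting from an arbitrary $X\in T\mathcal{C}\cap T\mathcal{A}\mathcal{P}(\hat{\mathcal{Q}})$, it uses the scaling law $d\Phi^tX_{\mathfrak{a}}=e^tX_{\mathfrak{a}}$ of Lemma \ref{invariance2}, topological transitivity of the Teichm\"uller flow on $\mathcal{C}$, and convergence of the renormalized vectors $d\Phi^{t_i}(X)/\Vert d\Phi^{t_i}(X)\Vert$ along a recurrent orbit to manufacture purely real (strong unstable) Schiffer directions tangent to $\mathcal{C}$, iterating to obtain a real subbundle $\mathcal{R}$ of rank $r=\mathrm{dim}(\mathcal{C})-2\,\mathrm{rk}(\mathcal{C})$ with $T\mathcal{C}\cap T\mathcal{A}\mathcal{P}(\hat{\mathcal{Q}})=\mathbb{C}\mathcal{R}$ and with $\mathcal{C}$ invariant under the flows $\Lambda^t_{\mathfrak{a}}$. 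Your route is shorter, pointwise, and avoids both transitivity and smoothness of the bundle $T\mathcal{C}\cap T\mathcal{A}\mathcal{P}$; what the paper's route delivers in return is the explicit conclusion that these tangent directions come from one fixed subspace $\mathcal{V}\subset\mathbb{R}^{k}$ of zero-mean weight vectors, the same at every point of $\mathcal{C}$, together with global invariance of $\mathcal{C}$ under the associated flows --- precisely the form in which the lemma is used later in the proof of Proposition \ref{nofullrank}. Your argument yields this as well (on the cover $\hat{\mathcal{Q}}$ with numbered zeros, $T\mathcal{C}\cap\ker p$ is constant in each chart, hence constant along $\mathcal{C}$ by connectedness, and local flow-invariance propagates for all time by closedness of $\mathcal{C}$), but since you obtain flow-invariance by appealing to the equivalence stated in the definition of complex affine rather than by exhibiting it, you should record this consequence explicitly so that the later application goes through.
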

\begin{proof} A proper affine invariant submanifold of  
${\cal Q}$ lifts to a proper affine invariant submanifold
of $\hat {\cal Q}$, so it suffices to consider
such manifolds ${\cal C}$ in $\hat{\cal Q}$.
Let $r={\rm dim}({\cal C})-2 {\rm rk}({\cal C})$. We may assume that
$r>0$. 
Then for each $q\in {\cal C}$ there is a vector 
$X\in T_q{\cal A\cal P}(\hat{\cal Q})$ which is 
tangent to ${\cal C}$.
By invariance of ${\cal C}$ under
the Teichm\"uller flow, we have
$d\Phi^t(X)\in 
T{\cal A\cal P}(\hat{\cal Q})\cap T{\cal C}$  
for all $t$.

A vector 
$X\in T{\cal A\cal P}(\hat{\cal Q})\cap T{\cal C}$ decomposes as
$X=X^u+X^s$ where $X^u\in T{\cal A\cal P}(\hat{\cal Q})$
is real (and hence tangent to the strong unstable foliation) 
and $X^s$ is imaginary (and hence tangent
to the strong stable foliation). 
We claim that we can find a vector 
$Y\in T{\cal C}\cap T{\cal A\cal P}(\hat{\cal Q})$ 
which either is tangent to the strong
unstable or to the strong stable foliation. 
To this end we may assume that $X^u\not=0$. Since this is an open
condition and since the Teichm\"uller flow on ${\cal C}$
is topologically transitive, we may furthermore assume 
that the $\Phi^t$-orbit 
of the footpoint $q$ of $X$ is 
dense in ${\cal C}$. 
Then there is a sequence
$t_i\to \infty$ such that $\Phi^{t_i}(q)\to q$. 

Choose any smooth norm $\Vert \,\Vert$ on $T\hat{\cal Q}$. 
Up to passing to a subsequence, 
\[d\Phi^{t_i}(X)/\Vert d\Phi^{t_i}(X)\Vert\] converges to a vector
$Y\in T_q{\cal A\cal P}(\hat {\cal Q})$ 
which is tangent to the strong unstable foliation.
As the bundle $T{\cal C}\cap T{\cal A\cal P}(\hat{\cal Q})$ 
is a smooth $d\Phi^t$-invariant subbundle of the 
restriction of the tangent bundle of 
$\hat{\cal Q}$ to ${\cal C}$, we have 
$Y\in T{\cal C}\cap T{\cal A\cal P}(\hat{\cal Q})$ which is what we
wanted to show.

Using Lemma \ref{invariance2} and density of the $\Phi^t$-orbit of $q$,
if $0\not=\mathfrak{a}\in \mathbb{R}^{k}$ 
is such that 
$Y=X_{\mathfrak{a}}(q)$
then $X_{\mathfrak{a}}(u)\in T{\cal C}$
for all $u\in {\cal C}$. 
As a consequence,
${\cal C}$ is invariant under the flow $\Lambda_{\mathfrak{a}}^t$.

By invariance of $T{\cal C}$ 
under the complex structure $J$,
if $r=1$ then 
\[T{\cal C}\cap T{\cal A\cal P}(\hat {\cal Q})=
\mathbb{R}X_{\mathfrak{a}}\oplus J\mathbb{R}X_{\mathfrak{a}}\]
and we are done. Otherwise 
there is a tangent vector $X\in T{\cal C}\cap
T{\cal A\cal P}(\hat {\cal Q})-\mathbb{C}X_{\mathfrak{a}}$.
Apply the above argument to $X$, perhaps via
replacing the Teichm\"uller flow by its inverse. In finitely many
such steps we conclude that
there is a smooth subbundle 
${\cal R}$ of $T{\cal C}\cap T{\cal A\cal P}(\hat {\cal Q})$
which is tangent to the strong unstable foliation 
(i.e. real for the real structure) and of rank $r$ such that
$T{\cal C}\cap T{\cal A\cal P}(\hat{\cal Q})=\mathbb{C}{\cal R}$.
Moreover,  
if $\omega\in {\cal C}$ and if  
$X_{\mathfrak{a}}(\omega)\in {\cal R}$ 
then $X_{\mathfrak{a}}(q)\in {\cal R}$ for every $q\in {\cal C}$ and 
${\cal C}$ is invariant under the flow
$\Lambda^t_{\mathfrak{a}}$.
The same argument applied to the imaginary
subbundle $i{\cal R}$ of $T{\cal C}$ 
and equivariance under the
action of the circle group of rotations 
yields the statement of the lemma.
\end{proof}

We now always assume that 
${\cal C}\subset {\cal Q}$ is an affine invariant manifold
such that the (complex) dimension of 
$T{\cal C}\cap T{\cal A\cal P}({\cal Q})$ 
is at least one. We call such an affine invariant
manifold \emph{redundant}. 
By Lemma \ref{affinelemma}, there are Schiffer variations 
$X_{\mathfrak{a}}\subset 
T{\cal A\cal P}(\hat{\cal Q})$ which are tangent
to ${\cal C}$.


Let $q\in {\cal Q}$ and 
let $\alpha$ be a horizontal saddle
connection for $q$ of length $\ell(\alpha)$; here
the length is taken with respect to the flat metric. 
For a vector $\mathfrak{a}\in \mathbb{R}^k$ with zero mean 
define the \emph{oriented $\mathfrak{a}$-weighted length of 
$\alpha$} by
\[w_{\mathfrak{a}}(\alpha)=\ell(\alpha)/b\]
where $b$ is the oriented 
difference of the coordinates of $\mathfrak{a}$ 
at the endpoints of $\alpha$ (i.e. the weight of the incoming 
endpoint minus the weight of the outgoing endpoint for the orientation
defined by $\omega$) 
provided that this difference does not vanish, and define
$w_{\mathfrak{a}}(\alpha)=\infty$ otherwise.
Note that if we replace $\mathfrak{a}$ by any nonzero 
real multiple then the oriented
$\mathfrak{a}$-weighted length of a saddle connection 
multiplies with the inverse of the
multiple. As a consequence, the set of 
horizontal saddle connections
whose oriented $\mathfrak{a}$-weighted length is 
positive and minimal 
is invariant under scaling
$X_{\mathfrak{a}}$ with a positive number.

An \emph{oriented cycle} of horizontal saddle connections 
for an abelian differential $\omega$ is an embedded
closed curve in $S$ 
consisting of at least two horizontal
saddle connections, and these saddle connections
are equipped with the orientation induced by $\omega$. 
We do not require that these orientations fit together to an 
orientation of the simple closed curve. 
An example of an oriented cycle is 
a \emph{bigon} which 
consists of two distinct 
homologous horizontal saddle connections of the same
length which meet at two distinct
endpoints with an angle an integral 
multiple of $2\pi$. 
Cycles of combinatorial length $u$ 
pass through $u$ distinct zeros of $\omega$.

\begin{remark} 
As explained on p.73 of \cite{EMZ03}, a translation surface
which is generic for the Lebesgue measure in a stratum 
has infinitely many bigons of saddle connections- in fact,
the number of these bigons grows in length with the same
rate as the number of all saddle connections.
\end{remark}

\begin{definition}\label{flexible}
Let ${\cal C}\subset \hat {\cal Q}$ be a redundant affine invariant
manifold 
and let $X_{\mathfrak{a}}\in T{\cal A\cal P}(\hat{\cal Q})$ 
be tangent to ${\cal C}$.
Define $q\in {\cal C}$ to be 
\emph{flexible} for $X_\mathfrak{a}$ if there is some $s\in [0,2\pi)$ 
with the following properties.
\begin{enumerate}
\item 
The graph $G$ of horizontal saddle connections for 
$e^{is}q$ is nonempty and contains elements of finite
$\mathfrak{a}$-weighted length. 
\item 
The subgraph of $G$ of saddle connections for $e^{is}q$  
with smallest positive 
$\mathfrak{a}$-weighted length does not contain cycles. 
\end{enumerate}
A translation surface which is not flexible for $X_{\mathfrak{a}}$ is called 
\emph{rigid} for $X_{\mathfrak{a}}$.
We call the vector field 
$X_{\mathfrak{a}}$ \emph{flexible} for ${\cal C}$ if 
there is a flexible translation surface $q\in {\cal C}$ for
$X_{\mathfrak{a}}$.
\end{definition}

Note that if $q\in {\cal C}$ is flexible for 
$X_{\mathfrak{a}}$ then $q$ is flexible for 
$cX_{\mathfrak{a}}$ for every $c\in \mathbb{R}-\{0\}$ 
(to see this for a negative number $c$ use the 
fact that
$-\omega=e^{\pi i}\omega\in {\cal C}$).
In particular, if we denote by
${\cal V}$ the real subspace 
of the complex
vector space $T{\cal C}\cap T{\cal A\cal P}({\cal Q})$ then we
can talk about flexible points in the
projectivization $P{\cal V}\subset P\mathbb{R}^k$ of 
${\cal V}$.

The next lemma states the basic properties of flexible
vector fields.

\begin{lemma}\label{invariantflex}
\begin{enumerate}
\item If $[X_{\mathfrak{a}}]\in P{\cal V}$ is flexible for $q\in {\cal C}$ then
there are open neighborhoods $V$ of $[X_{\mathfrak{a}}]$ in 
$P{\cal V}$, $U$ of $q$ in ${\cal C}$ such that every $[Y]\in V$ is 
flexible for every $z\in U$.
\item For every 
$[X_{\mathfrak{a}}]\in P{\cal V}$ the set of all
$[X_{\mathfrak{a}}]$-rigid points in ${\cal C}$ 
is a finite (perhaps empty) 
union of affine invariant manifolds which is proper
if and only if $[X_{\mathfrak{a}}]$ is flexible for ${\cal C}$.
\end{enumerate}
\end{lemma}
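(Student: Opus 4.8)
The plan is to recast Definition~\ref{flexible} in terms of \emph{scaled holonomies} and then treat the two assertions separately. For a saddle connection $\beta$ of $q$ with holonomy $v_\beta\in\mathbb{C}$ and nonzero weight $b_\beta=b_{\mathfrak{a}}(\beta)$ (the oriented difference of the coordinates of $\mathfrak{a}$ at the endpoints of $\beta$) set $P_\beta=v_\beta/b_\beta$. A saddle connection is horizontal for $e^{is}q$ precisely when $v_\beta$ points in the direction $e^{-is}$, and in that case its $\mathfrak{a}$-weighted length equals the signed distance of $P_\beta$ from the origin along the line $\mathbb{R}e^{-is}$. Thus the data entering Definition~\ref{flexible} is encoded by the discrete set $\{P_\beta\}\subset\mathbb{C}$ together with the combinatorial graph structure on the numbered zeros, and $q$ is flexible for $X_{\mathfrak{a}}$ if and only if there is a line $L$ through the origin such that the scaled holonomies lying on $L$ that are nearest to the origin on the positive ray span a forest. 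Two facts make this effective: the relevant weights take finitely many values bounded away from $0$, so $\{\,|P_\beta|<R\,\}$ is finite for every $R$; and saddle connections, their holonomies and their weights vary continuously under perturbation of $q$ in ${\cal C}$ and of $\mathfrak{a}$, while varying $\mathfrak{a}$ merely moves each $P_\beta$ radially.

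For assertion (1), fix a flexible $q$, a witnessing direction $\theta_0$, and let $\Gamma$ be the minimizing forest, of common weighted length $w_0>0$; every edge of $\Gamma$ then has the \emph{same} scaled holonomy $w_0e^{i\theta_0}$. List the finitely many $\beta$ with $|P_\beta(q,\mathfrak{a})|<w_0$; each lies in a direction different from $\theta_0$, so there is an open cone of directions about $\theta_0$ disjoint from all of theirs. Now perturb $q$ to $z\in{\cal C}$ and $\mathfrak{a}$ to $\mathfrak{b}$, fix one edge $\alpha\in\Gamma$, and take as new direction $\theta^\ast=\arg P_\alpha(z,\mathfrak{b})$, which lies in the cone once $z,\mathfrak{b}$ are close enough. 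On $L=\mathbb{R}e^{i\theta^\ast}$ the scaled holonomies at distance $<w_0$ are the perturbations of the listed $\beta$, which stay off $L$ by the choice of cone; the scaled holonomies on the positive ray near distance $w_0$ are perturbations of edges of $\Gamma$; everything else is at distance bounded below by the gap to the next weighted length. Hence the minimizing subgraph on $L$ is a subset of $\Gamma$, so it is again a forest, and $z$ is flexible for $X_{\mathfrak{b}}$. The estimates are uniform on a small neighborhood, yielding the required $U$ and $V$.

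For assertion (2), part (1) shows the flexible set is open, so the set ${\cal R}_{\mathfrak{a}}$ of rigid points is closed. I claim it is $SL(2,\mathbb{R})$-invariant. Invariance under the circle group is built into Definition~\ref{flexible}. Invariance under the Teichm\"uller flow is the key computation: the diagonal action sends the saddle connections of $q$ in a direction $\theta$ to those of $\Phi^tq$ in the single image direction $\theta'(\theta,t)$, scaling all of their flat lengths by the \emph{common} factor $\sqrt{e^t\cos^2\theta+e^{-t}\sin^2\theta}$ (compare Lemma~\ref{invariance2}); since the weights $b_\beta$ are unchanged, all $\mathfrak{a}$-weighted lengths in that direction scale by this common factor, so the minimizing subgraph and the forest/cycle alternative are preserved, and since $\theta\mapsto\theta'(\theta,t)$ is a bijection of the circle of directions, flexibility of $q$ is equivalent to that of $\Phi^tq$. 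As $SO(2)$ and the diagonal flow generate $SL(2,\mathbb{R})$, the closed set ${\cal R}_{\mathfrak{a}}$ is $SL(2,\mathbb{R})$-invariant. By \cite{EMM13} every orbit closure in ${\cal R}_{\mathfrak{a}}$ is an affine invariant manifold contained in ${\cal R}_{\mathfrak{a}}$; taking the maximal such submanifolds (ascending chains stabilize by the dimension bound) and ruling out an infinite family via the equidistribution and isolation results of \cite{EMM13}, one concludes that ${\cal R}_{\mathfrak{a}}$ is a finite union of affine invariant manifolds. Finally ${\cal R}_{\mathfrak{a}}\subsetneq{\cal C}$ exactly when ${\cal C}$ contains a flexible point, i.e. when $X_{\mathfrak{a}}$ is flexible for ${\cal C}$, which is the properness clause.

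The main obstacle is the finiteness step in (2): while closedness and $SL(2,\mathbb{R})$-invariance are elementary, promoting ``union of affine invariant manifolds'' to a \emph{finite} union relies essentially on the isolation and equidistribution theory of \cite{EMM13}, via the argument that an infinite sequence of distinct maximal affine invariant submanifolds would equidistribute to, hence eventually be contained in, a strictly larger affine invariant manifold still lying in the closed set ${\cal R}_{\mathfrak{a}}$, contradicting maximality.
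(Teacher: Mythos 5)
Your proposal is correct and follows essentially the same route as the paper: part (1) by a perturbation argument resting on finiteness of short saddle connections and smooth dependence of saddle connections (your scaled-holonomy bookkeeping $P_\beta=v_\beta/b_\beta$ is just a convenient reformulation of this), and part (2) by showing the flexible/rigid dichotomy is $SL(2,\mathbb{R})$-invariant --- trivially for rotations, and for the diagonal flow because parallel saddle connections scale by a common factor while the weights are unchanged (the paper's Lemma \ref{invariance2}) --- and then invoking the theorem of \cite{EMM13} that a closed $SL(2,\mathbb{R})$-invariant set is a finite union of affine invariant manifolds. The only caveat, shared with the paper's own write-up, is that in (1) the cone of directions should be chosen to avoid not just the directions of saddle connections with $|P_\beta|<w_0$ but also those of the finitely many saddle connections with $|P_\beta|$ in $[w_0,w_0+\epsilon]$ not parallel to $\Gamma$, so that no near-minimal saddle connection in a nearby direction can rotate onto the chosen ray; this is fixed by the same finiteness argument you already use.
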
 
\begin{proof}  
Let $\mathfrak{a}\in \mathbb{R}^k$ and assume that 
$q\in {\cal C}$ is flexible for $X_{\mathfrak{a}}$. 
Let $e^{is}$ be such that 
the graph $G$ of horizontal saddle connections of $e^{is}q$ of 
minimal positive $\mathfrak{a}$-weighted length 
is not empty and  does not  
have cycles. Recall that $G$ is composed of finitely many
saddle connections. 

Let $\alpha_1,\dots,\alpha_k$ 
be the saddle connections of minimal
positive $\mathfrak{a}$-weighted length. 
Since saddle connections for differentials $q$ 
depend smoothly on $q$ as long as $q$ remains in 
a fixed component of a stratum,
for each $z\in {\cal C}$ which 
is sufficiently close to $e^{is}q$ and for each $i$ there is 
a saddle connection $\alpha_i(z)$ 
for $z$ which is homotopic to 
$\alpha_i$ with fixed endpoints (for the natural identification of 
nearby surfaces) 
and which depends smoothly
on $z$. Moreover, the set of saddle connections
in the direction of $\alpha_i(z)$ of length 
bounded from above by some fixed number
can be obtained by
a smooth deformation of some (perhaps not all)
saddle connections of $q$.

As a consequence, for each point $z$ in 
a neighborhood $W$ of $q$ there is some $i$ such that 
$\alpha_i(z)$ is of minimal positive $\mathfrak{a}$-weighted length
in its direction. Moreover, 
since the graph $G$ has no cycles there are no cycles of saddle
connections of minimal positive $\mathfrak{a}$-weighted length 
on $z\in W$ whose direction coincides with the direction of 
$\alpha_i(z)$. This is what we wanted to show.

To summarize,
the set of points which are flexible for 
$X_{\mathfrak{a}}$ is open. The same reasoning shows that a point which is
flexible for $X_{\mathfrak{a}}$ is flexible for 
$X_{\mathfrak{b}}$ for every $b\in \mathbb{R}$ sufficiently close to
$\mathfrak{a}$. The
first part of the lemma follows.

To show the second part of the lemma, 
by Theorem 2.2 of \cite{EMM13} 
it suffices to show that the set of points in 
${\cal C}$ which are flexible for $X_{\mathfrak{a}}$ 
is invariant under the action of the group $SL(2,\mathbb{R})$.  
Now the group $SL(2,\mathbb{R})$ is generated by the
circle group of rotations and the group of diagonal matrices
and therefore  
it suffices to show that the set of all flexible  
points for $X_{\mathfrak{a}}$ in ${\cal C}$ 
is invariant under the action of 
these two groups.

Invariance under the action of the circle group of rotations
is immediate from the definition. To show invariance under the action 
of the diagonal group note that this action preserves saddle connections
and maps saddle connections with the same slope to
saddle connections with the same slope. 
Moreover, lengths of saddle connections with the same
slope are multiplied with the same constant.
The claim then follows from 
Lemma \ref{invariance2}. 
\end{proof}

\begin{example}\label{decagon}
1) There is an affine invariant manifold ${\cal C}$
of rank one and dimension 3 
in the principal stratum ${\cal H}(1,1)$ 
of the moduli space of abelian differentials 
on a surface of genus $g=2$. This manifold 
is mapped by the composition of the projection
${\cal H}(1,1)\to {\cal M}_2$ (here ${\cal M}_2$ is the 
moduli space of curves of genus 2) with the Torelli map
into the 
Hilbert modular surface ${\bf H}^2\times {\bf H}^2/SL(2,{\cal O}_{\sqrt{5}})$ 
for discriminant $D=5$. 
The principal boundary of ${\cal C}$ intersects the finite
face ${\cal H}(2)$ of ${\cal H}(1,1)$  
in a Teichm\"uller curve. The rigid set for the (unique up to scale)
real vector field $X_{\mathfrak{a}}$ consists 
of another Teichm\"uller curve,
the curve through the translation surface defined by the 
regular decagon. For no other
discriminant such a rigid curve in the corresponding
rank one affine invariant manifold exists \cite{McM05,McM06}.

2) Let ${\cal B}$ be the moduli space
of holomorphic differentials on $\mathbb{C}P^1$ 
with 12 simple poles and 4 double zeros. 
Taking a two-sheeted branched cover with a branch point 
at each of the singular points defines an 
$SL(2,\mathbb{R})$-invariant closed subset ${\cal C}$ of 
the stratum ${\cal Q}$ of abelian differentials with four 
double zeros on a surface of genus $5$. 
The hyperelliptic involution acts on saddle connections
connecting the zeros and hence saddle connections in a given
direction come in pairs. As a consequence, 
${\cal C}$ is contained in the rigid set of a flexible
vector field.
\end{example}

As in the case of strata, we can talk about 
the principal boundary of an affine invariant manifold and
its faces. 
The goal of the following proposition
is to investigate the finite faces of the principal 
boundary of a redundant affine invariant manifold
${\cal C}\subset {\cal Q}$. If $k\geq 2$ is the number
of zeros of ${\cal Q}$ then 
these finite faces are contained in strata whose
number of zeros is smaller than $k-1$.

By compatibility of the $SL(2,\mathbb{R})$-action with 
closures of strata, the closure of an affine invariant
manifold ${\cal C}\subset {\cal Q}$ in the entire 
moduli space ${\cal H}$ of 
abelian differentials is a closed $SL(2,\mathbb{R})$-invariant
set and hence a finite union of affine
invariant manifolds \cite{EMM13}. Note that this closure may
just be ${\cal C}$, e.g. when ${\cal C}$ is a Teichm\"uller curve.

If this closure does \emph{not} coincide with
${\cal C}$ then we can talk about the finite
principal boundary of ${\cal C}$ and its faces as before. 
It is a finite union of affine invariant manifolds.
If ${\cal B}\subset \overline{\cal C}$ is such a
finite face then we say that ${\cal B}\subset \overline{\cal C}$
is \emph{embedded in standard form} if every 
$q\in {\cal B}$ has a neighborhood $V$ with the following
property. There exists a number $\epsilon >0$ and a homeomorphism
$\phi:V\times B(\epsilon)\to \overline{\cal C}$ onto a neighborhood
of $q$ in $\overline{\cal C}$ such that $\phi(z,0)=z$ and 
$\phi(\{z\}\times B(\epsilon))\subset {\cal A\cal P}(z)$ for all 
$z\in V$.
 
 In the next proposition, the number of zeros of the component 
 of $\overline{\cal C}-{\cal C}$ may be strictly smaller than $k-1$.

\begin{proposition}\label{nofullrank}
Let ${\cal C}\subset {\cal Q}$ be an affine invariant
manifold of rank $\ell\geq 1$ and dimension $2\ell+r$ for some
$r>0$ and let $\overline{\cal C}$ be the closure of 
${\cal C}$ in ${\cal H}$. 
If ${\cal C}$ is flexible then 
$\overline{\cal C}-{\cal C}$ contains a nonempty 
finite union of affine invariant
manifolds of rank $\ell$ and dimension $2\ell+r-1$
which are embedded in $\overline{\cal C}$ in
standard form.
\end{proposition}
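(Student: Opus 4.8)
The plan is to manufacture the boundary face by collapsing a single horizontal saddle connection along a flexible real REL direction, and then to recover the standard-form product structure by re-opening the resulting slit in all complex directions. First I would exploit flexibility to arrange a clean degeneration. Since $r>0$, Lemma \ref{affinelemma} guarantees a Schiffer variation $X_{\mathfrak{a}}$ tangent to ${\cal C}$, and flexibility provides a flexible point for it. By Lemma \ref{invariantflex}(1) the flexible locus is open, and flexibility persists under small perturbations of both the basepoint and the direction $\mathfrak{a}$. Using this openness together with the fact that the set of directions realizing a horizontal saddle connection is countable, I would perturb to a flexible $q\in{\cal C}$ and a rotate $e^{is}q$ for which a single horizontal saddle connection $\alpha$, joining two \emph{distinct} zeros, has strictly smallest positive weighted length $w_{\mathfrak{a}}(\alpha)$. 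A saddle connection from a zero to itself would be a cycle in the graph $G$ and is excluded by condition (2) of Definition \ref{flexible}; isolating a single minimal edge is precisely what forces the codimension to be exactly one rather than collapsing a larger forest.

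Next I would flow and identify the limit. By Lemma \ref{affinelemma} the manifold ${\cal C}$ is invariant under $\Lambda^t_{\mathfrak{a}}$, and by the length computation underlying Lemma \ref{invariance2} the flow $\Lambda^t_{\mathfrak{a}}$ shrinks $\alpha$ linearly, collapsing it at a finite time determined by $w_{\mathfrak{a}}(\alpha)$. Because $\alpha$ is the unique saddle connection of minimal weighted length and joins distinct zeros, this collapse coalesces exactly its two endpoints while keeping the surface connected and nondegenerate; thus, as in the principal-boundary discussion preceding Proposition \ref{coreface}, the arc $t\mapsto\Lambda^t_{\mathfrak{a}}(e^{is}q)$ limits to a differential $\zeta$ in a finite face ${\cal Q}'$ of the principal boundary, a stratum with one fewer zero. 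Invariance of ${\cal C}$ under $\Lambda^t_{\mathfrak{a}}$ gives $\zeta\in\overline{\cal C}-{\cal C}$. I would then let ${\cal C}'$ be the component of $\overline{\cal C}-{\cal C}$ through $\zeta$; since $\overline{\cal C}$ is a finite union of affine invariant manifolds by \cite{EMM13} and this decomposition is compatible with passage to boundary faces, ${\cal C}'$ is itself (a finite union of) affine invariant manifold(s), nonempty by construction.

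It then remains to establish the standard-form fibration and read off the numerics. The key extra input is that ${\cal C}$ is invariant not only under $\Lambda^t_{\mathfrak{a}}$ but, because $T{\cal C}\cap T{\cal A\cal P}({\cal Q})$ is a complex subspace, under the full complex flow $\Lambda^t_{e^{is}\mathfrak{a}}$; geometrically this is the re-opening of the collapsed slit in an arbitrary complex direction $\gamma$. Exactly as in Proposition \ref{coreface}, opening the slit at $\zeta$ by a small $\gamma\in B(\epsilon)$ yields a differential with the same absolute periods as $\zeta$ lying in $\overline{\cal C}$, producing a homeomorphism $\phi\colon V\times B(\epsilon)\to\overline{\cal C}$ with $\phi(z,0)=z$ and $\phi(\{z\}\times B(\epsilon))\subset{\cal A\cal P}(z)$. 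This is precisely the assertion that ${\cal C}'$ is embedded in standard form, and it gives $\dim_{\mathbb{C}}{\cal C}=\dim_{\mathbb{C}}{\cal C}'+1$, so $\dim{\cal C}'=2\ell+r-1$. Finally, since collapsing and re-opening $\alpha$ alter only a relative period and fix the absolute cohomology class, the projection $p$ satisfies $\dim p(T{\cal C}')=\dim p(T{\cal C})=2\ell$, whence ${\rm rk}({\cal C}')=\ell$.

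The main obstacle I expect is the third step: verifying that re-opening the slit from the boundary point genuinely returns into ${\cal C}$, so that the local product structure of Proposition \ref{coreface} is inherited by ${\cal C}$ itself and not merely by the ambient stratum. This is exactly where the complex (not merely real) invariance of $T{\cal C}\cap T{\cal A\cal P}({\cal Q})$ and the invariance of ${\cal C}$ under all $\Lambda^t_{e^{is}\mathfrak{a}}$ must be used with care: one should check that the complex disks through points of ${\cal C}$ near $\zeta$, which lie in ${\cal C}$ by this invariance, converge to the disk through $\zeta$ and thereby assemble the standard-form chart. A secondary subtlety, already flagged above, is guaranteeing in the first step that a single saddle connection can be made strictly minimal, so that the degeneration lowers the dimension by exactly one.
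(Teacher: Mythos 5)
Your overall strategy---flow along a flexible REL direction $\Lambda^t_{\mathfrak{a}}$ to collapse the minimal-weighted-length horizontal saddle connections, land in a boundary stratum, and recover the product structure by re-opening the slit---is the same as the paper's. But your first step contains a genuine gap: you claim that, using openness of the flexible locus (Lemma \ref{invariantflex}) and countability of directions containing saddle connections, you can perturb inside ${\cal C}$ until a \emph{single} saddle connection joining distinct zeros is strictly minimal for $w_{\mathfrak{a}}$. This is not justified, and in general it is false: an affine invariant manifold can force several saddle connections to remain parallel and of equal length at \emph{every} point of ${\cal C}$ (this is exactly the pairing phenomenon in Example \ref{decagon}(2), where a deck transformation of the covering construction matches saddle connections in pairs). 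Such coincidences are identities on ${\cal C}$, so no perturbation within ${\cal C}$ and no genericity-of-directions argument can break them; the minimal subgraph may unavoidably be a forest with several edges. Since your entire dimension count rests on the assertion that ``isolating a single minimal edge is precisely what forces the codimension to be exactly one,'' the proof as written does not cover all flexible ${\cal C}$. (A smaller inaccuracy: a saddle connection from a zero to itself is excluded from the minimal subgraph not by condition (2) of Definition \ref{flexible}---the paper's cycles have at least two edges---but because its weight difference $b$ vanishes, so its $\mathfrak{a}$-weighted length is infinite.)

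The paper avoids this issue by collapsing the whole minimal forest $G(z)$ at once: all of its edges have the same minimal positive $\mathfrak{a}$-weighted length, hence they collapse at the same finite time under $\Lambda^t_{\mathfrak{a}}$; each tree contracts to a single point, so the limit is a surface \emph{without} nodes lying in a stratum with $k-u$ zeros when $G(z)$ is connected with $u\geq 1$ edges. The codimension-one conclusion does not come from the number of edges collapsed but from the structure of the limit map: its fibers are the one-real-dimensional flow lines of $\Lambda^t_{\mathfrak{a}}$ over a smooth transversal to the circle action, and after the $S^1$-equivariant extension the image has complex dimension ${\rm dim}({\cal C})-1$ regardless of $u$; the proposition then follows from $SL(2,\mathbb{R})$-invariance of $\overline{\cal C}$ and the finiteness theorem of \cite{EMM13}. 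Separately, note that the ``main obstacle'' you flag at the end (that re-opening the slit returns into $\overline{\cal C}$, giving the standard-form chart) is left as a sketch in your write-up; this part is acceptable in outline, but the single-edge reduction is the step that must be removed, not repaired, by adopting the forest-collapsing argument.
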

\begin{proof} As before, we pass to the cover $\hat{\cal Q}$ with
numbered zeros. Thus 
let ${\cal C}\subset \hat {\cal Q}$ be an affine
invariant manifold of rank $\ell\geq 1$ and dimension
$2\ell+r$ for some $r>0$. 
By Lemma \ref{affinelemma},
${\cal C}$ intersects each leaf of the absolute period foliation
in an affine subspace. In particular, there is a linear 
subspace ${\cal V}$ of $\mathbb{R}^k$ of points of zero mean so that
for each $\mathfrak{a}\in {\cal V}$ and 
every $q\in {\cal C}$, the flow line $\Lambda_{\mathfrak{a}}^tq$
is contained in ${\cal C}$ as long as it is defined.

Assume that there is some $q\in {\cal C}$ and 
some $\mathfrak{a}\in {\cal V}$ such that
$q$ is flexible for $X_{\mathfrak{a}}$. 
By definition, there is some $s\in [0,2\pi)$ such that
the differential $e^{is}q$ admits a finite graph $G$ of saddle connections
of minimal positive oriented 
$\mathfrak{a}$-weigthed length, and this graph does 
not contain any cycles. As in the proof of Lemma \ref{invariantflex},
we may assume that this graph 
depends smoothly on $q$ in the following sense.
There is a local smooth transversal ${\cal V}\subset{\cal C}$
to the circle action 
and a smooth function $\sigma:{\cal V}\to \mathbb{R}$ through
$\sigma(q)=s$ so that for all $z\in {\cal V}$ there is a 
graph $G(z)$ of saddle connections on $z$ which are horizontal
for $e^{i\sigma(z)}z$ so that $G(z)$ is of minimal $\mathfrak{a}$-weighted
length and depends smoothly on $z$.

Let $\alpha\subset G(z)$ be such a saddle 
connection of minimal $\mathfrak{a}$-weighted length.
Then the length of $\alpha$ decreases under the Schiffer variation
through $e^{i\sigma(z)}z$ which is 
defined by $X_{\mathfrak{a}}$.  
We claim that the flow $\Lambda_{\mathfrak{a}}^t$ of $X_{\mathfrak{a}}$
through $e^{i\sigma{z}}z$ 
limits on a surface in $\overline{{\cal C}}$ which does not have nodes. 
Namely, by assumption $G(z)$ is a finite union of trees 
and hence by the definition of 
the priented $\mathfrak{a}$-weighted length and the properties of the 
Schiffer variations discussed in Section \ref{absoluteperiod}, 
the limiting surface is obtained by collapsing each of these trees 
to a point and hence identifying the vertices of each of these trees.
Hence if $G(z)$ is connected and has $u\geq 1$ edges 
then the limiting surface is contained 
in the stratum ${\cal Q}_1\subset \overline{\cal Q}$ of differentials with 
$k-u$ zeros. 
 
As $z$ varies through ${\cal V}$, the limiting surfaces define
a subset $V$ of a component of a stratum with $k-u$ zeros
which is contained in the closure of ${\cal C}$. 
This set is transverse to 
the action of the unit circle by 
multiplication with a complex number
of absolute value one. 
The map can be extended to an $S^1$-equivariant map of 
a neighborhood of ${\cal V}$ onto a neighborhood of the image.
The complex
dimension of the image equals the complex dimension 
of ${\cal C}$ minus one.

As a consequence, the closure of ${\cal C}$ intersects 
a boundary stratum of ${\cal Q}$ of codimension at least one in a set
which is of dimension at least ${\rm dim}({\cal C})-1$.
The statement of the proposition now follows from invariance
of the closure of ${\cal C}$ 
under the action of $SL(2,\mathbb{R})$.
\end{proof}

\noindent
{\bf Conjecture:} A rank $\ell $ submanifold of 
a rank $\ell\geq 2$ affine invariant
manifold ${\cal C}$ 
is contained in the rigid set of a 
flexible vector field for ${\cal C}$.

\bigskip\noindent
MATHEMATISCHES INSTITUT DER UNIVERSIT\"AT BONN\\
ENDENICHER ALLEE 60\\ 
53115 BONN, 
GERMANY\\
e-mail: ursula@math.uni-bonn.de

\end{document}